\definecolor{asparagus}{rgb}{0.53, 0.66, 0.42}
\definecolor{olive}{rgb}{0.5, 0.5, 0.0}
\definecolor{antiquefuchsia}{rgb}{0.57, 0.36, 0.51}
\definecolor{golden(brown)}{rgb}{0.6, 0.4, 0.08} %
\definecolor{gray-asparagus}{rgb}{0.27, 0.35, 0.27}
\definecolor{glaucous}{rgb}{0.38, 0.51, 0.71}
\definecolor{airforceblue}{rgb}{0.36, 0.54, 0.66}
\definecolor{blue(munsell)}{rgb}{0.0, 0.5, 0.69}
\definecolor{americanrose}{rgb}{1.0, 0.01, 0.24}
\theoremstyle{plain}
\newtheorem{condition}[theorem]{Condition}
\numberwithin{equation}{section}
\newcommand{\diff}{\mathrm{d}} 
\newcommand{\reals}{\mathbb{R}} 
\newcommand{\dto}{\xrightarrow{d}} 
\newcommand{\point}{\,\cdot\,}
\newcommand{\GPr}{GP$_R$}
\newcommand{\GPS}{GP$_S$}
\newcommand{\GPs}{GP$_U$}
\newcommand{\GPT}{GP$_T$}
\newcommand{\abs}[1]{\lvert{#1}\rvert}  
\renewcommand{\Pr}{\operatorname{P}}
\newcommand{\Exp}{\operatorname{E}}
\newcommand{\expec}{\Exp}
\newcommand{\GP}{\operatorname{GP}}
\newcommand{\bzero}{\bm{0}}
\newcommand{\bone}{\bm{1}}
\newcommand{\binfty}{\bm{\infty}}
\newcommand{\1}{\mathds{1}}
\newcommand{\balpha}{{\bm{\alpha}}}
\newcommand{\bgamma}{{\bm{\gamma}}}
\newcommand{\bEta}{{\bm{\eta}}}
\newcommand{\bmu}{{\bm{\mu}}}
\newcommand{\bsigma}{{\bm{\sigma}}}
\newcommand{\bomega}{{\bm{\omega}}}
\newcommand{\ba}{\bm{a}}
\newcommand{\bb}{\bm{b}}
\newcommand{\bs}{\bm{s}}
\newcommand{\bu}{\bm{u}}
\newcommand{\bv}{\bm{v}}
\newcommand{\bx}{\bm{x}}
\newcommand{\by}{\bm{y}}
\newcommand{\bz}{\bm{z}}
\newcommand{\bR}{\bm{R}}
\newcommand{\bS}{\bm{S}}
\newcommand{\bT}{\bm{T}}
\newcommand{\bU}{\bm{U}}
\newcommand{\bW}{\bm{W}}
\newcommand{\bX}{\bm{X}}
\newcommand{\bY}{\bm{Y}}
\journalname{Extremes}
\title{Multivariate peaks over thresholds models}
\author{Holger Rootz\'{e}n \and Johan Segers \and Jennifer L. Wadsworth}
\institute{Holger Rootz\'{e}n     \at  \small Chalmers and Gothenburg University:   \email{hrootzen@chalmers.se}\\
Johan Segers \at \small Universit\'e catholique de Louvain: \email{johan.segers@uclouvain.be}\\
Jennifer L. Wadsworth \at \small Lancaster University: \email {j.wadsworth@lancaster.ac.uk}}
\begin{document}

\maketitle

\begin{abstract}
Multivariate peaks over thresholds modelling based on generalized Pareto distributions has up to now only been used in few and mostly two-dimensional situations. This paper contributes theoretical understanding, models which can respect physical constraints, inference tools, and simulation methods to support routine use, with an aim at higher dimensions.  We derive a general point process model for extreme episodes in data, and show how conditioning the distribution of extreme episodes on threshold exceedance gives four basic representations of the family of generalized Pareto distributions. The first representation is constructed on the real scale of the observations. The second one starts with a model on a standard exponential scale which is then transformed to the real scale. The third and fourth representations are reformulations of  a spectral representation proposed in A. Ferreira and L. de Haan [Bernoulli 20 (2014) 1717--1737]. Numerically tractable forms of densities and censored densities are found and give tools for flexible parametric likelihood inference. New simulation algorithms, explicit formulas for probabilities and conditional probabilities, and conditions which make the conditional distribution of weighted component sums generalized Pareto are derived.
\keywords{Extreme values, multivariate generalized Pareto distribution, peaks over threshold likelihoods, simulation of extremes}
 \subclass{62G32 \and 60G70 \and 62E10}
\end{abstract}

\section{Introduction}

Peaks over thresholds (PoT) modelling was introduced in the hydrological literature \citep{nerc1975}. The philosophy is simple: extreme events, perhaps extreme water levels, are often quite different from ordinary everyday behaviour, and ordinary behaviour then has little to say about extremes, so that only other extreme events give useful information about future extreme events. To make this idea operational, one defines an extreme event as a value, say a water level, which exceeds some high threshold, and only uses the sizes of the excesses over this threshold, the ``peaks over the threshold'', for statistical inference. This idea was given a theoretical foundation by combining it with asymptotic arguments motivating that the natural model is that exceedances occur according to a Poisson process and that excess sizes follow a generalized Pareto (GP) distribution \citep{balkema+dh:1974, pickands:1975, smith:1984, davison+s:1990}.

Since then, numerous papers have used one-dimensional PoT models (though often not under this name), in areas ranging from earth and atmosphere science to finance, see e.g.\ \citet{kysely+p+b:2010}, \citet{katz+parlange+naveau:2002}, and \citet{mcneil+f+e:2015}. The method has also been presented in a number of books, see e.g.\  \citet{coles2001}, \citet{beirlant2004}, and \citet{dey-yant2015}.

However, often it is not just one extreme event which is important, but an entire extreme episode. In the 2005 flooding of New Orleans caused by windstorm Katrina, more than 50 levees were breached. However, many others held, and damage was determined by which levees  held and which were flooded \citep{katrina2003}. Extreme rain can lead to devastating landslides, and can be caused by one day with very extreme rainfall, or by two or more consecutive days with smaller, but still extreme rain amounts  \citep{Guzzetti+p+r+s:2015}. The 2003 heat-wave in central Europe is estimated to have killed between 25\,000 and 70\,000 people. Many deaths, however, were not caused by one extremely hot day, but rather by a long sequence of high minimum nightly temperatures which led to increasing fatigue and eventually to death \citep{Grynszpan2003}. These and very many other important societal problems underline the importance of statistical methods which can handle multivariate extreme episodes.

 Using the same philosophy as for extreme events in one dimension, PoT modelling of extreme episodes proceeds by choosing a high threshold for each component of the episode, and then to consider an episode as extreme if at least one component exceeds its threshold. One then only models the difference between the values of the components and their respective thresholds. However, in the multivariate case all the componentwise differences in an extreme episode are  modelled, both the overshoots and the undershoots. For instance, in a rainfall episode affecting a number of catchments,  both the amount of rain in the catchments where rainfall exceeds the threshold and in catchments where the threshold is not exceeded are important. Additionally, the inclusion of undershoots increases the amount of information that can be used for inference. Just as in one dimension, the natural model is that extreme episodes occur according to a Poisson process and that overshoots and undershoots (or undershoots larger than a censoring threshold) jointly follow a multivariate GP distribution.

 The aim of this paper is to contribute  probabilistic understanding, physically motivated models, likelihood tools, and simulation methods, all of which are needed for multivariate PoT modelling of extreme episodes via multivariate GP distributions. Specifically, the key contributions are: new representations of GP distributions conducive to model construction; density formulas for each of these representations; new properties of multivariate GP distributions; and simulation tools. Many of these results are oriented towards enabling improved statistical modelling, but here we restrict ourselves to a probabilistic study. A companion paper \citep{kiriliouk+rootzen+segers+wadsworth:2016} addresses practical modelling aspects.

We begin by deriving the basic properties of the class of multivariate GP distributions. We then pursue the following program:
\begin{compactenum}[(i)]
\item
to exhibit the possible point process limits of extreme episodes in data;
\item
to show how conditioning on threshold exceedances transforms the distribution of the extreme episodes to GP distributions, and to use this to find physically motivated representations of the multivariate GP distributions; and
\item
to derive likelihoods and censored likelihoods for the representations in (ii).
\end{compactenum}

In part (ii) of the program, we develop four representations.  The first one is in the same units as the observations, i.e., on the real scale,  and in the second one  the model is built on a standard exponential scale and then transformed to the real observation scale. The third is a spectral representation proposed in \citet{ferreira2014}, and the fourth one a simple reformulation of this representation aimed at aiding model construction. A useful, and to us surprising, discovery is that it is possible to derive the density also for the fourth representation, and that this density in fact is simpler than the densities for the other two first representations.  The importance of (iii) is that likelihood inference makes it possible to incorporate covariates, e.g.\ temporal or spatial trends, in a flexible and practical way.

The insights and results obtained in carrying out this program, we believe, will lead to new models, new computational techniques, and new ways  to make the necessary compromises between modelling realism and computational tractability which together will make possible routine use, also in dimensions higher than two. The limiting factor is the number of parameters rather than the number of variables. The models mentioned in Example~\ref{ex:nuggets} may be a case in point. The formulas for probabilities, conditional probabilities and conditional densities given in Sections~\ref{sec:densities} and~\ref{sec:condprob}, together with the discovery that weighted sums of components of GP distributions conditioned to be positive also have a GP distribution, add to the usefulness of the methods. Simulation of GP distributions is needed for several reasons, including computation of the probabilities of complex dangerous events and goodness of fit checking. The final contribution of this paper is a number of simulation algorithms for multivariate GP distributions.

The multivariate GP distributions were introduced in \cite{tajvidi1996}, \citet[Chapter~8]{beirlant2004}, and \cite{rootzen2006}; see also \citet[Chapter~5]{falk2010}. A closely related approximation was used in \citet{smith+tawn+coles:1997}. The literature on applications of multivariate PoT modelling is rather sparse  \citep{brodin2009,  michel2009, aulbach2012multivariate1}. Some earlier papers use point process models which are closely related to the PoT/GP approach \citep{coles1991, joe+s+w:1992}. Other papers consider nonparametric or semiparametric rank-based PoT methods focusing on the dependence structure but largely ignoring modelling the margins \citep{dehaan2008, einmahl2012, einmahl2016}. However, the GP approach has the advantages that it provides complete models for the threshold excesses, that it can use well-established model checking tools, and that, compared to the point process approach, it leads to more natural parametrizations of trends in the Poisson process which governs the occurrence of extreme episodes.

There is an important literature on modelling componentwise, perhaps yearly, maxima  with multivariate generalized extreme value (GEV) distributions: for a survey in the spatial context see \cite{davison+p+r:2012}.  However, componentwise maxima may occur at different times for different components, and in many situations the focus is on the PoT structure: extremes which occur simultaneously. Additionally, likelihood inference for GEV distributions is complicated by a lack of tractable analytic expressions for high-dimensional densities, so that inference often is much easier, and perhaps more efficient, in GP models; see \cite{huser+d+g:2015} for a survey and an extensive comparison. The most important special case of GP models are those for which  all variables can be simultaneously extreme, and there is no mass placed on hyperplanes (see Section~\ref{sec:background} for details of the support); this is a typical modelling assumption. Further comment on the situation of asymptotic independence, where this does not hold, is made in Section~\ref{sec:conclusion}, as well as in \cite{kiriliouk+rootzen+segers+wadsworth:2016}.

Section~\ref{sec:background} derives and exemplifies the basic properties of the GP cumulative distribution functions (cdf-s). In Section~\ref{sec:point processes} we develop a point process model of extreme episodes, and  Section~\ref{sec:representation} shows how conditioning on exceeding high thresholds leads to three basic representations of the GP distributions. Section~\ref{sec:densities} exhibits the fourth representation and derives densities and censored likelihoods, while Section~\ref{sec:condprob} gives formulas for probabilities and conditional probabilities in GP distributions. Finally, Section~\ref{sec:simulation} contributes simulation algorithms for multivariate GP distributions and Section~\ref{sec:conclusion} discusses parametrization issues and gives a concluding overview.

\section{Multivariate generalized Pareto distributions}
\label{sec:background}

This section first briefly recalls and  adapts existing theory for multivariate GEV distributions, and then derives a number of the basic properties of GP distributions.

Throughout we use notation as follows. The maximum and minimum operators are denoted by the symbols $\vee$ and $\wedge$, respectively. Bold symbols denote $d$-variate vectors. For instance, $\balpha = (\alpha_1, \ldots, \alpha_d)$ and $\bzero = (0, \ldots, 0) \in \reals^d$. Operations and relations involving such vectors are meant componentwise, with shorter vectors being recycled if necessary. For instance $\ba\bx+\bb =(a_1x_1 + b_1, \ldots, a_dx_d + b_d)$, $\bx \le \by$ if $x_j \le y_j$ for $j = 1, \ldots, d$, and $t^\bgamma = (t^{\gamma_1}, \ldots, t^{\gamma_d})$.  If $F$ is a cdf then we write $\bar{F}=1-F$ for its tail function, and also write $F$ for the probability distribution determined by the cdf. That $\bX \sim F$ means that $\bX$  has distribution $F$, and $\dto$ denotes convergence in distribution. The symbol $\1$ is the indicator function: $\1_A$ equals $1$ on the set $A$ and $0$ otherwise.

For fixed $\gamma \in \reals$, the functions $x \mapsto (x^\gamma - 1)/\gamma$ (for $x > 0$) and $x \mapsto (1 + \gamma x)^{1/\gamma}$ are to be interpreted as their limits $\log(x)$ and $\exp(x)$, respectively, if $\gamma = 0$. This convention also applies componentwise to expressions of the form $(\bx^\bgamma - 1) / \bgamma$ and $(1 + \bgamma \bx)^{1/\bgamma}$.

Below we repeatedly use that if $\bX$ is a $d$-dimensional vector with $P(\bX \nleq \bu)>0$ and $\bs >\bzero$  then
\begin{equation}
\label{eq:conditioning}
  \Pr[\bs(\bX - \bu) \leq \bx \mid \bX - \bu \nleq \bzero ]
  =
  \frac{\Pr[\bX \leq \bx/\bs + \bu] - \Pr[\bX \leq (\bx\wedge \bzero)/\bs + \bu ]}{\Pr[\bX \nleq \bu]}.
\end{equation}

\subsection{Background: multivariate generalized extreme value distributions}
\label{subsec:GEV}

Throughout, $G$ denotes a $d$-variate GEV distribution, so that in particular $G$ has non-degenerate margins. The class of GEV distributions has the following equivalent characterizations, see e.g. \citet{beirlant2004}: (M1) {\em It is the class of limit distributions of location-scale normalized maxima}, i.e., the distributions which are limits
\begin{equation}
\label{eq:DA}
\textstyle \Pr[\ba_n^{-1} (\bigvee_{i=1}^d \bX_i - \bb_n) \leq \bx ] \dto G(\bx), \;\;\;  \text{as}  \;\;\; n \to \infty,
\end{equation}
of normalized maxima of independent and identically distributed (i.i.d.) vectors $ \bX_1, \bX_2, \ldots \sim F$,  for $\ba_n > \bzero$ and $\bb_n$;
 and (M2) {\em  It is the class of max-stable distributions}, i.e., distributions such that taking maxima of i.i.d.\ vectors from the distribution only leads to a location-scale change of the distribution.
By (M1) the class of GEV distributions is closed under location and scale changes.

The marginal distribution functions, $G_1, \ldots, G_d$, of $G$ may be written as

\begin{equation}
\label{eq:gmarginals}
  G_j(x) = \exp \left\{ - \left( 1 + \gamma_j \tfrac{x - \mu_j}{\alpha_j} \right)^{-1/\gamma_j} \right\},
\end{equation}
for $x \in \reals$ such that $\alpha_j + \gamma_j (x - \mu_j) > 0$. We will use this parametrization throughout.  The parameter range is $(\gamma_j, \mu_j, \alpha_j) \in \reals \times \reals \times (0, \infty)$. Define
\begin{equation*}
  \bsigma =  \balpha - \bgamma \bmu,
\end{equation*}
so that $\sigma_j = \alpha_j - \gamma_j \mu_j, \,j \in \{1, \ldots, d\}$. Then $G_j$ is supported by the interval
\begin{equation}
\label{eq:Gsupport}
  \tilde{I}_j
  =
  \begin{cases}
     (-\sigma_j/\gamma_j, \infty) & \text{if $\gamma_j > 0$,} \\
     (-\infty, \infty) & \text{if $\gamma_j = 0$,} \\
    (-\infty, -\sigma_j/\gamma_j) & \text{if $\gamma_j < 0$,}
  \end{cases}
\end{equation}
while $G$ is supported by a (subset of) the rectangle $ \tilde{I}_1 \times \cdots \times   \tilde{I}_d$. The lower and upper endpoints of $G_j$ are denoted by $\eta_j \in \reals \cup \{-\infty\}$ and $\omega_j \in \reals \cup \{+\infty\}$, respectively. One may alternatively write the condition $\bx \in \tilde{I}_1 \times \cdots \times   \tilde{I}_d$ as $\bgamma \bx + \bsigma > \bzero$.

Below we assume that $0 < G(\bzero) < 1$. This inequality is equivalent to $G_j(0) > 0$ for \emph{all} $j \in \{1, \ldots, d\}$ and $G_j(0) < 1$ for \emph{some} $j \in \{1, \ldots, d\}$. The equivalence follows from positive quadrant dependence, $G( \bzero ) \ge \prod_{j=1}^d G_j(0)$ \citep{marshall1983}. PoT models are determined by  the difference between the thresholds and the location parameters of the observations, and not by their individual values. Hence, it does not entail any loss of generality to shift the location parameters $\{\mu_i\}$ to make the assumption  $0 < G(\bzero) < 1$ hold.

We will often use the stronger condition that $\bsigma > \bzero$, i.e., that $\sigma_j > 0$ for \emph{all} $j \in \{1, \ldots, d\}$. By \eqref{eq:Gsupport}, this is equivalent to assuming that $0$ is in the interior of the support of every one of the $d$ margins $G_1, \ldots, G_d$, i.e., that $\eta_j < 0 < \omega_j$ and thus $0 < G_j(0) < 1$ for all $j \in \{1, \ldots, d\}$. This is an additional restriction only for $\gamma_j < 0$: if $\gamma_j = 0$, then $\sigma_j = \alpha_j > 0$, while if $\gamma_j > 0$ then $G(\bzero) > 0$ implies $\eta_j < 0$ and thus $\sigma_j = - \gamma_j \eta_j> 0$.

An easy argument shows that $G$ is max-stable if and only if for each $t>0$ there exist scale and location vectors $\ba_t \in (0, \infty)^d$ and $\bb_t \in \reals^d$ such that $G(\ba_t \bx +\bb_t)^t \equiv G(\bx)$ \citep[Equation~(5.17)]{resnick1987}. It follows from \eqref{eq:gmarginals} that these parameters are given by
\begin{equation} \label{eq:tparameters}
   \ba_t = t^{\bgamma}, \;\;\;\;
   \bb_t = \bsigma(t^{\bgamma} - \bone)/\bgamma.
\end{equation}

To a GEV distribution $G$ we can associate a Borel measure $\nu$ on $\prod_{j=1}^d [-\eta_j, \infty) \setminus \{ \bEta \}$ by the formula $\nu( \{ \by : \by \not\leq \bx \} ) = - \log G( \bx )$ for $\bx \in [-\infty, \infty)^d$, with the convention that $-\log(0) = \infty$ \citep[Proposition~5.8]{resnick1987}. The measure $\nu$ is called \emph{intensity measure} because, by (M1),
the limit of the expected number of location-scale normalized points, say
$\ba_n^{-1} ( \bX_i - \bb_n )$, $i \in \{1, \ldots, n\}$, in a Borel set $A$ which is bounded away from $\bEta$ and such that $\nu(\partial A) = 0$, is equal to $\nu(A)$. The intensity measure $\nu$ determines the limit distribution of the sequence of point processes $\sum_{i=1}^n \delta_{ \ba_n^{-1} ( \bX_i - \bb_n ) }$, see Section~\ref{sec:point processes}.

\subsection{Generalized Pareto distributions}
\label{subsec:GPD}

Let $G$ be a GEV distribution with $0 < G(\bzero) < 1$ and let $\nu$ be the corresponding intensity measure. Then $0 < \nu ( \{ \by : \by \not\leq \bzero \} ) < \infty$, so that we can define a probability measure supported by the set $\{ \by : \by \not\leq \bzero \}$ by restricting the intensity measure $\nu$ to that set and normalizing it. The result is the \emph{generalized Pareto (GP)} distribution associated to $G$. Its cdf $H$ may be expressed as
\begin{equation}
\label{eq:MGPD}
  H( \bx ) =
  \begin{cases}
    \dfrac{1}{\log G( \bzero )} \log \left( \dfrac{ G( \bx \wedge \bzero ) }{ G( \bx ) } \right)
    & \text{if $\bx > \bEta$,} \\[1em]
    0
    & \text{if $x_j < \eta_j$ for some $j=1,\ldots,d$,}
  \end{cases}
\end{equation}
see \citet[Chapter~8]{beirlant2004} and \cite{rootzen2006}. If a GEV cdf $G$ and a GP cdf $H$ satisfy \eqref{eq:MGPD}, then we say that they are \emph{associated} and write $H \leftrightarrow G$.
For completeness, we prove \eqref{eq:MGPD} in the Appendix. For points $\bx \in [-\infty, \infty)^d$ with $\bx \ge \bEta$ and $x_j = \eta_j$  for some $j$, the value of $H(\bx)$ is determined by right-hand continuity. Below is shown that  $\bEta$ is determined by the values of $ H( \bx )$ for $ \bx \geq \bzero$.

The probability that the $j$-th component, $j \in \{1, \ldots, d\}$, exceeds zero is equal to $1 - H_j(0) = \log G_j(0) / \log G(\bzero)$, which is positive if and only if $G_j(0) < 1$, that is, when $\sigma_j = \alpha_j - \gamma_j \mu_j > 0$. Since $G(\bzero) < 1$ implies that $G_j(0) < 1$ for some but not necessarily all $j$, the GP family includes distributions for which one (or several) of the components never exceed their threshold, so that the support of that component lies in  $[-\infty, 0]$.  This could be useful in some modelling situations, but still, the situation of main interest is when all components have a positive probability of being an exceedance, or equivalently when $H_j(0) < 1$ for all $j \in \{1, \ldots, d\}$, or, again equivalently, when $\bsigma > \bzero$.

Similarly to the characterizations (M1) and (M2) of the GEV distributions, the class of GP distributions $H$ such that $H_j(0) < 1$ for all $j \in \{1, \ldots, d\}$ has the following characterizations \citep{rootzen2006}\footnote{In the article, the truncation factor ``$\mbox{} \vee \bEta$'' is missing in Theorem 2.2 and Theorem 2.3~(ii). A correction note is forthcoming.}. The  functions  $\bsigma_t, \bu_t$ in the characterizations are assumed to be continuous, and additionally $\ \bu_t$ is assumed increasing.
\begin{enumerate}[(T1)]
\item
\emph{The GP distributions are limits of distributions of threshold excesses:} Let $\bX \sim F$. If there exist scaling and threshold functions $\bs_t \in (0, \infty)^d$ and $\bu_t \in \reals^d$ with  $F(\bu_t) < 1$ and $F(\bu_t) \to 1$ as $t \to \infty$, such that
\[
  \Pr[\bs_t^{-1}(\bX-\bu_t)\vee \bzero \leq \,\cdot\, \mid  \bX \nleq \bu_t]
  \dto H_+,
  \qquad \text{as}\ t \to \infty,
\]
 for some cdf $H_+$ with nondegenerate margins, then the function $\{H_+(\bx); \bx > \bzero\}$ can be uniquely extended to a GP cdf $H(\bx); \bx \in  \reals^d$, and

\begin{equation}\label{eq:GPlimit}
  \Pr[\bs_t^{-1}(\bX-\bu_t)\vee \bEta \leq \,\cdot\, \mid  \bX \nleq \bu_t]
  \dto H,
  \qquad \text{as}\ t \to \infty.
\end{equation}
\item
 \emph{The GP distributions are threshold-stable:} Let $\bX \sim H$ where $H$ has nondegenerate margins on $\reals_+$. If there exist scaling and threshold functions $\bs_t  \in (0, \infty)^d$ and $\bu_t \in \reals^d$,  with $\bu_1=\bzero$ and $H(\bu_t) \to 1$  as $t \to \infty$, such that
\begin{equation}\label{eq:equality}
   \Pr[\bs_t^{-1}(\bX-\bu_t) \leq \bx \mid  \bX \nleq \bu_t]
  = H(\bx)
\end{equation}
for $\bx \geq \bzero$ then there is an uniquely determined GP cdf $\tilde{H}$ such that $\tilde{H}(\bx)=H(\bx)$  for $\bx > \bEta$. Conversely, all GP distributions $H$ for which $H_j(0) < 1$ for all $j \in \{1, \ldots, d\}$ satisfy \eqref{eq:equality} for all $\bx \in \reals^d$.
\end{enumerate}

We use the term ``threshold-stable'' for property (T2) in analogy with the terms ``sum-stable'' and ``max-stable''. A distribution is sum- or max-stable if the sum or maximum, respectively, of independent variables with this distribution has the same distribution, up to a location-scale change. Analogously, a distribution is threshold-stable if conditioning on the exceedance of suitable higher thresholds leads to distributions which, up to scale changes,  are the same as the original distribution. This property is illustrated in Figure~\ref{fig:stability}, with $\bm{u}_t$ and $\bm{s}_t$ as given below in Theorem~\ref{prop:GPproperties}(viii).

\begin{figure}
 \centering
 \includegraphics[width=0.3\textwidth]{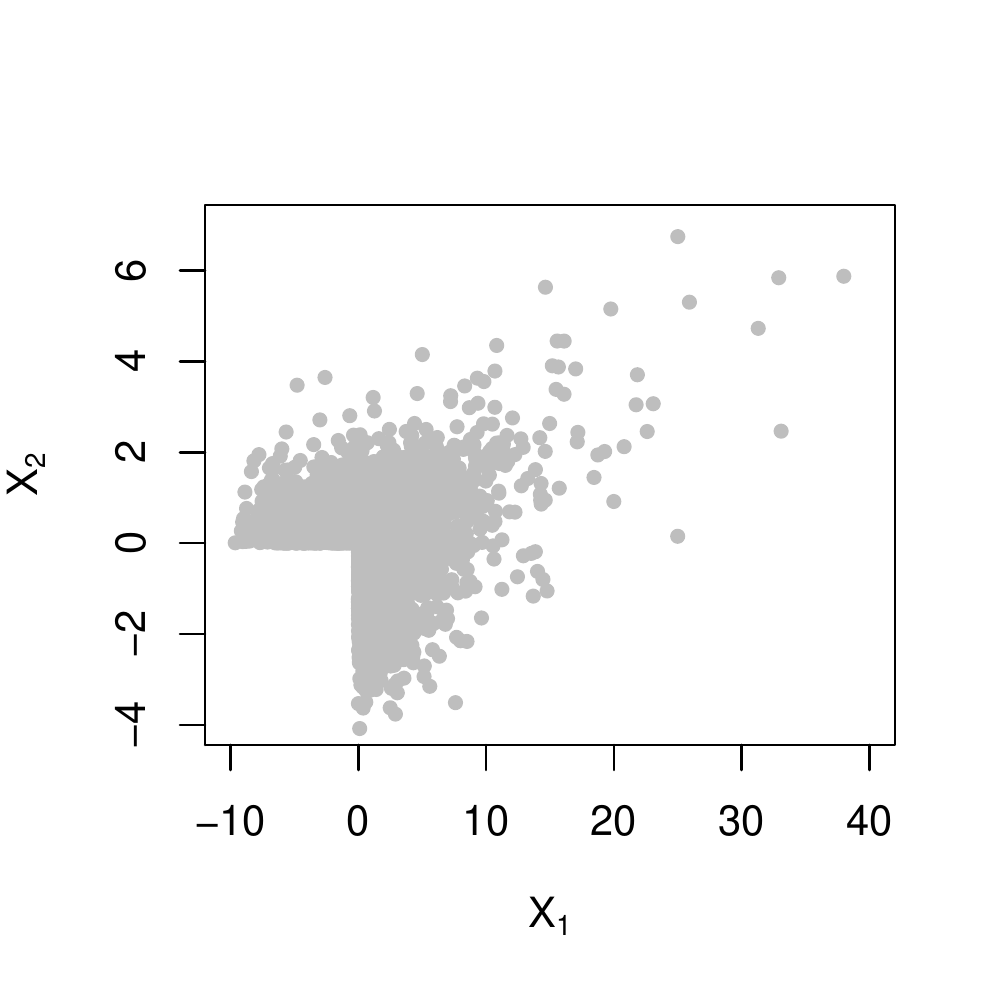}
 \includegraphics[width=0.3\textwidth]{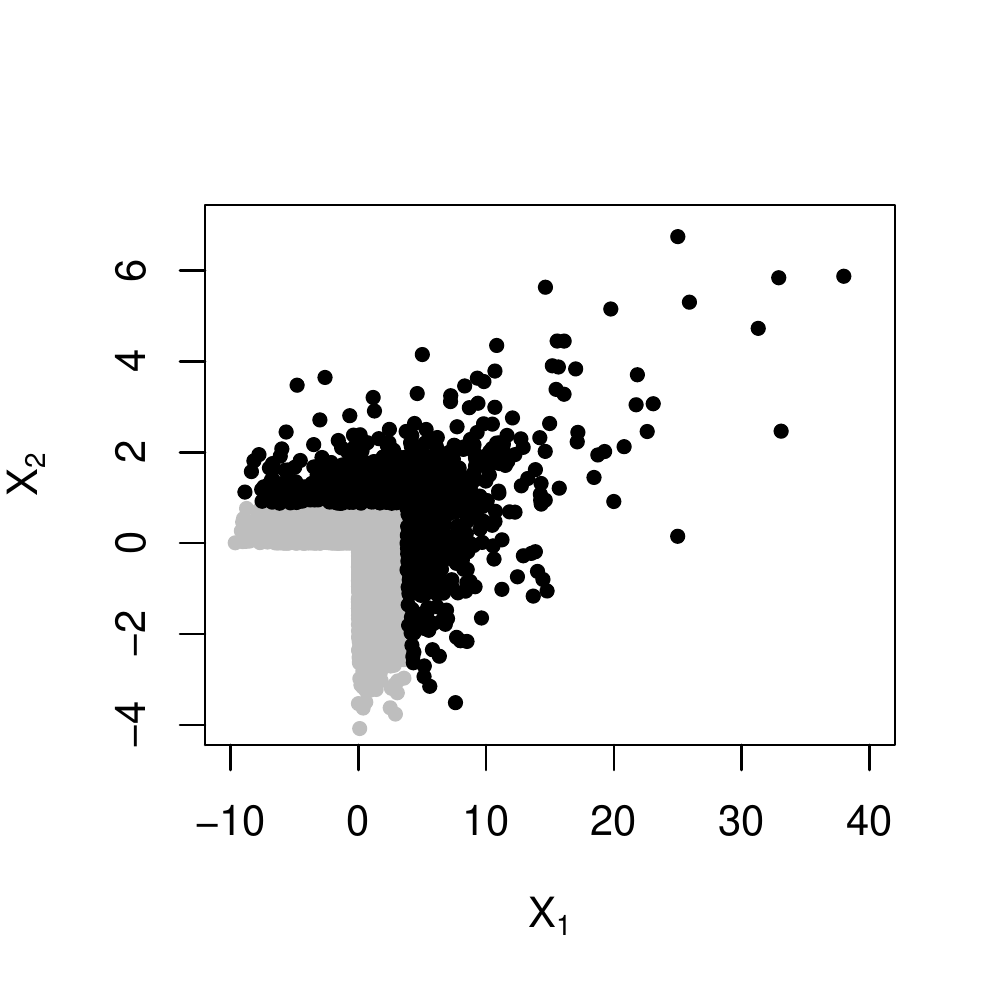}
 \includegraphics[width=0.3\textwidth]{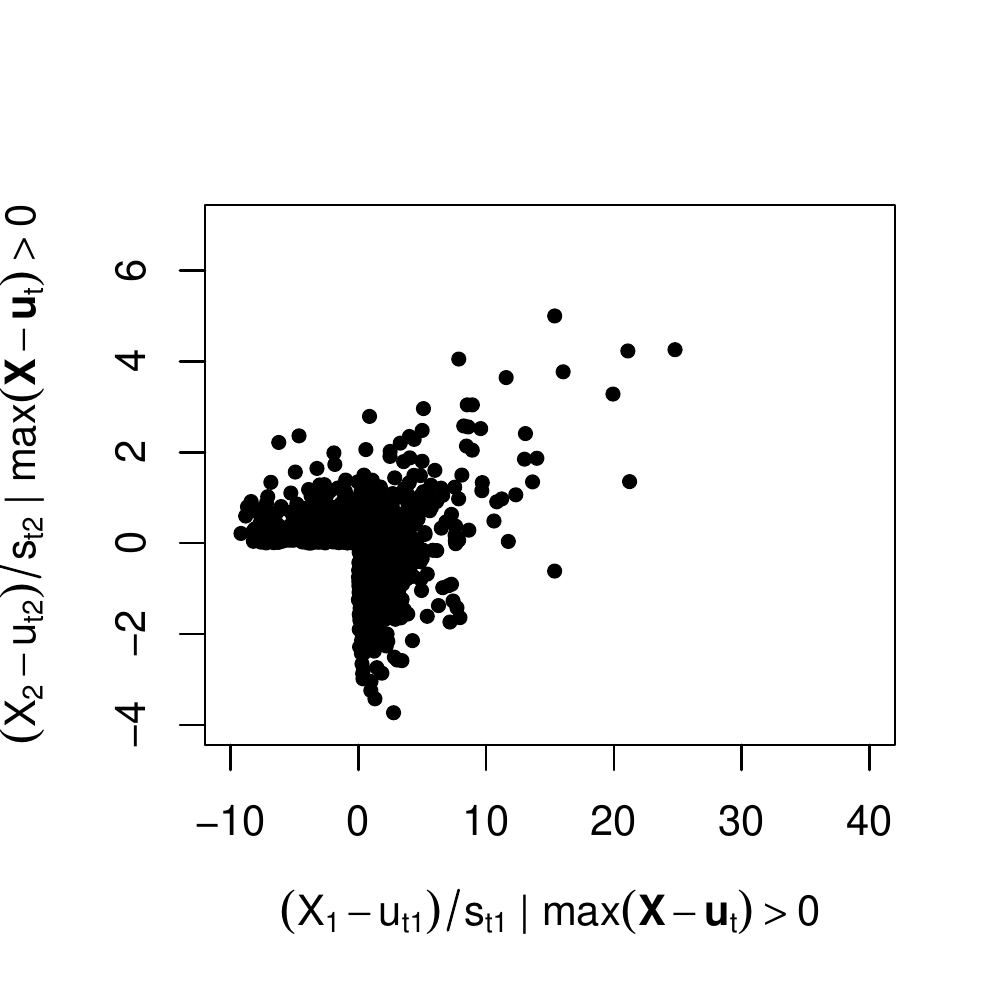}
 \caption{Illustration of (T2). Left panel: points from a two-dimensional multivariate GP distribution with parameters $\bsigma=(2,0.5)$ and $\bgamma=(0.2,0.1)$. Centre: black points denote exceedances of the threshold $\bm{u}_t = \bsigma(t^\bgamma-1)/\bgamma$, for $t=5$. Right: excesses of $\bm{u}_t$ rescaled by $\bm{s}_t=t^\bgamma$ have the same distribution as points in the left panel, but are five times fewer.  In particular extremes  in right plot hence are smaller. }
 \label{fig:stability}
\end{figure}

If (T1) holds we say that $F$ belongs to the (threshold) domain of attraction of $H$.  In contrast to the limit in (M1), different threshold functions can lead to limits which are not location-scale transformations of one another. A cdf $F$ is in a domain of attraction for maxima if and only if it is in a threshold domain of attraction.

 The GP distribution $H$ is supported by the set
\[
  [\bEta, \bomega] \setminus [\bEta, \bzero]
  = \{ \bx \in \reals^d \;:\; \text{$\eta_j \leq x_j \leq \omega_j$ for all $j$, and $x_j > 0$ for some $j$} \}.
\]
It may assign positive mass to the hyperplanes $\{ \by : y_j = \eta_j \}$, even if $\eta_j = -\infty$; see Example~\ref{ex:independentFrechet}  below.

For a non-empty subset $J$ of $\{1, \ldots, d\}$, let $H_J$ denote the corresponding $\abs{J}$-variate marginal distribution of $H$. Further, let $H_J^+$ denote $H_J$ conditioned to have at least one positive component; this presupposes that $\sigma_j > 0$ for some $j \in J$, where $\sigma_j = \alpha_j - \mu_j \gamma_j$ as before. By Theorem~\ref{prop:GPproperties}(i) below, if $\bsigma > \bzero$, then $H_j^+:=H_{\{j\}}^+$, the $j$-th marginal distribution of $H$, conditioned to be positive, has cdf
\begin{equation}
\label{eq:gpmarginspos}
  H_j^+(x) = 1 - \left( 1 + \gamma_j \tfrac{x}{\sigma_j} \right)^{-1/\gamma_j},
  \qquad \text{for $x \ge 0$ such that $\sigma_j + \gamma_j x > 0$}.
\end{equation}
This proves the intuitively appealing result that $H_j^+$ is a one-dimensional GP distribution, and shows that $\bsigma, \bgamma$ and then also $\bEta$ are determined by the values of $ H( \bx )$ for $ \bx \geq \bzero$.

If $J$ is a non-empty subset of $\{1, \ldots, d\}$ and $\bx \in [-\infty, \infty]^J$, then $\bar \bx \in [-\infty, \infty]^d$ is defined by $\bar x_j = x_j$ if $j \in J$ and $\bar x_j = \infty$ if $j \not\in J$.
Thus, if $\bX \sim H$, then the marginal distribution, $H_J$, of $(X_j : j \in J)$ is given by $H_J(\bx)=H(\bar\bx)$ for $\bx \in [-\infty,\infty]^J$, and if $H_J(\bzero)<1$ then
\begin{equation}\label{eq:condposmargins}
  H^+_J(\bx)
  = \frac{H_J(\bx)-H_J(\bx \wedge \bzero)}{\bar{H}_J(\bzero)}
\end{equation}
is the conditional distribution of $(X_j : j\in J)$ given that $\max_{j \in J} X_j > 0$, see~\eqref{eq:conditioning} above.  Recall that  $G$ and $H$  are said to be associated, $H \leftrightarrow G$, if they satisfy \eqref{eq:MGPD}.

\begin{theorem}
\label{prop:GPproperties}
Let $G$ be a GEV with margins \eqref{eq:gmarginals} and suppose $H \leftrightarrow G$.
\begin{compactenum}[(i)]
\item
Let $J \subset \{1, \ldots, d\}$. If $H_J(\bzero)<1$ then $H^+_J$ is a GP cdf too, with $H^+_J \leftrightarrow G_J$, and if $\sigma_j > 0$ then \eqref{eq:gpmarginspos} holds. Further, $H_J$ is a GP distribution if and only if $H_J(\bzero)=0$.
\item
A scale transformation of H is also a GP distribution.
\item
Let $\bX \sim H$. If $\bsigma > \bzero$ and $\bu \geq \bzero$ with $H(\bu) < \bone$, then the conditional distribution  of $\bX- \bu$ given that $\bX \nleq \bu$ is a GP distribution with the same shape parameter $\bgamma$ and $\bsigma$ replaced by $\bsigma + \bgamma \bu$.
\item
If $\{H_n\}$ is a sequence of GP distributions with all components of the vectors $\bsigma_n$ bounded away from $0$ and if $H_n \dto \tilde{H}$ then $\tilde{H}$ is a GP distribution too.
\item
A finite or infinite mixture of GP distributions with the same $\bsigma$ and $\bgamma$ is a GP distribution.
\item
We have $H \leftrightarrow G^t$ for all $t>0$. Conversely, if $H \leftrightarrow G_*$ for some $G_*$ and if $\bsigma > \bzero$ then $G_*= G^{t_1}$ for some $t_1 >0$.
\item
If $G(\bzero)=e^{-v}$ then $G(\bx) = \exp\{-v \bar{H}(\bx)\}$, $\bx \geq \bzero$, and if $\bsigma > \bzero$ this determines $G$.
\item
If $\bsigma > \bzero$, the scaling and threshold functions in the (T2) characterization of GP distributions may be taken as $\bs_t = t^{\bgamma}$ and $\bu_t = \bsigma (t^{\bgamma} - \bone)/\bgamma$, for $t\geq 1$.
\item
The parameters $\bgamma$ and $\bsigma$ are identifiable from $H$.
\end{compactenum}
\end{theorem}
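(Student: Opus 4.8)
\emph{Proof strategy.} The plan is to route everything through the intensity measure $\nu$ and the defining relation $H = \nu(\point \cap \{\by\nleq\bzero\})/v$, $v=-\log G(\bzero)$. The one structural fact I would isolate first is this: a Borel measure $\mu$ on $[\bEta,\infty)\setminus\{\bEta\}$ with non-degenerate one-dimensional margins is the intensity measure of a GEV with shape $\bgamma$ and parameter $\bsigma$ if and only if $(\phi_t)_*\mu = t^{-1}\mu$ for all $t>0$, where $\phi_t(\by)=(\by-\bb_t)/\ba_t$ with $\ba_t=t^{\bgamma}$ and $\bb_t=\bsigma(t^{\bgamma}-\bone)/\bgamma$ as in \eqref{eq:tparameters}; this is merely the max-stability identity $G(\ba_t\bx+\bb_t)^t=G(\bx)$ transcribed onto the $\pi$-system $\{\{\by\nleq\bx\}\}$ that generates the Borel sets. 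The crucial point is that $\phi_t$ depends on $G$ only through $(\bgamma,\bsigma)$. I would prove the items in the order (vii), (vi), (i), then (ii)/(iii)/(viii), then (v)/(iv), and finally (ix), since the later ones lean on the earlier.

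The formula $G(\bx)=\exp\{-v\bar H(\bx)\}$ on $\bx\ge\bzero$ in (vii) is read straight off \eqref{eq:MGPD} (there $\bx\wedge\bzero=\bzero$), and the forward half of (vi) follows because replacing $G$ by $G^t$ replaces $\nu$ by $t\nu$, whose normalized restriction to $\{\by\nleq\bzero\}$ is unchanged. For the determinacy claim of (vii) — hence the converse of (vi) — when $\bsigma>\bzero$, $\bzero$ lies in the interior of the support of every margin of $G$, so the values of $G$ on $[\bzero,\infty)$ fix each one-dimensional margin (a one-dimensional GEV is determined by its restriction to any non-trivial sub-interval of its support), and after standardizing the margins to unit Fr\'echet they fix $G^{\circ}$ on an orthant $[\bz_0,\infty)$ with $\bz_0>\bzero$; since $-\log G^{\circ}$ is homogeneous of degree $-1$, scaling propagates this to all of $(\bzero,\infty)$, which determines $G^{\circ}$ and hence $G$, so a $G_*$ with $G_*(\bx)=G(\bx)^{v_*/v}$ on $\bx\ge\bzero$ must equal $G^{v_*/v}$. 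Part (i) is the main computation: $\nu_J:=(\pi_J)_*\nu$ satisfies $-\log G_J=\nu_J(\{\bz_J\nleq\bzero\})$, so it is the intensity measure of $G_J$, and a direct set-chase — using $\{\max_{j\in J}X_j>0\}=\pi_J^{-1}\{\bz_J\nleq\bzero\}$ together with the fact that $\{\bX_J\le\bx_J\}\cap\{\max_{j\in J}X_j>0\}$ already lies inside $\{\bX\nleq\bzero\}$ — identifies the conditional law $H_J^+$ with the normalized restriction of $\nu_J$ to $\{\bz_J\nleq\bzero\}$, i.e.\ $H_J^+\leftrightarrow G_J$; taking $J=\{j\}$ and inserting \eqref{eq:gmarginals} yields \eqref{eq:gpmarginspos}, and $H_J$ is itself GP iff $H_J(\bzero)=0$ (then $H_J=H_J^+$; conversely every GP misses $[\bEta,\bzero]$).

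Parts (ii) and (iii) are instances of the structural fact: the map $\by\mapsto\bc\by$ pushes $\nu$ to the intensity measure of the GEV $G(\point/\bc)$ (shape $\bgamma$, parameter $\bc\bsigma$) and $H$ to the corresponding GP, while the translation $\by\mapsto\by-\bu$ with $\bu\ge\bzero$ pushes $\nu$ to the intensity measure of $G(\point+\bu)$ (shape $\bgamma$, parameter $\bsigma+\bgamma\bu$), and because $\{\by\nleq\bu\}\subseteq\{\by\nleq\bzero\}$ the conditional law of $\bX-\bu$ given $\bX\nleq\bu$ is exactly that GP. Part (viii) follows by applying (iii) with $\bu=\bu_t=\bsigma(t^{\bgamma}-\bone)/\bgamma$ (one checks $\bzero\le\bu_t<\bomega$ for $t\ge1$), which sends $\bsigma$ to $\bsigma t^{\bgamma}$, and then rescaling by $t^{-\bgamma}$ via (ii) to restore $\bsigma$, so the conditioned-and-rescaled law shares $(\bgamma,\bsigma)$ with $H$ and hence agrees with $H$ on $\{\bx\ge\bzero\}$. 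For (v), write $H=\int H^{(\theta)}\,m(d\theta)$ with $H^{(\theta)}=\nu^{(\theta)}(\point\cap\{\by\nleq\bzero\})/v^{(\theta)}$; since all $H^{(\theta)}$ share $(\bgamma,\bsigma)$ they share $\phi_t$, so $\mu:=\int\nu^{(\theta)}/v^{(\theta)}\,m(d\theta)$ again satisfies $(\phi_t)_*\mu=t^{-1}\mu$ and has non-degenerate margins, hence is a GEV intensity measure with $\mu(\{\by\nleq\bzero\})=1$, so $H=\mu|_{\{\by\nleq\bzero\}}$ is its GP. Part (iv) uses the same template plus a compactness argument: after the (vii)-normalization $v_n=1$ one has $G_n(\bx)=\exp\{-\bar H_n(\bx)\}\to\exp\{-\bar{\tilde H}(\bx)\}$ at continuity points $\bx\ge\bzero$, one argues the parameters $(\bgamma_n,\bsigma_n)$ stay in a compact set (bounded below off $\bzero$ by hypothesis, and divergence of $\bgamma_n$ or $\bsigma_n$ would force the margins $H_{n,j}^+$ to degenerate, contradicting convergence of $H_n$ to a non-degenerate limit), so a subsequence converges to some $(\bgamma,\bsigma)$ with $\bsigma>\bzero$, and (vii) identifies $\tilde H$ with the GP of $G_{(\bgamma,\bsigma)}$. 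Finally (ix) is immediate from \eqref{eq:gpmarginspos}: the functional $H_j^+$ of $H$ equals $1-(1+\gamma_j x/\sigma_j)^{-1/\gamma_j}$, an identifiable family in $(\gamma_j,\sigma_j)$, so these are recovered for every component with $\sigma_j>0$ (equivalently $H_j(0)<1$), while for the remaining components (necessarily $\gamma_j<0$, $\sigma_j\le0$) one recovers $(\gamma_j,\sigma_j)$ from the behaviour of $H_j$ near the boundary of its support.

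\emph{Main obstacle.} The two genuinely non-routine steps are the set-chase in (i) — since essentially everything downstream rests on \eqref{eq:gpmarginspos} — and the homogeneity-plus-analyticity argument that a GEV with $\bsigma>\bzero$ is pinned down by its restriction to $[\bzero,\infty)$, which underlies (vi), (vii), and hence (iv). Keeping the parameters $(\bgamma_n,\bsigma_n)$ from escaping to infinity in the compactness step of (iv), so that the weak limit stays non-degenerate and is indeed a GP, is the remaining delicate point.
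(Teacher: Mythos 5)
Your route is essentially sound but genuinely different from the paper's. You work systematically with the intensity measure $\nu$ and the scaling relation $(\phi_t)_*\nu = t^{-1}\nu$, whereas the paper argues almost entirely at the level of cdf-s via \eqref{eq:MGPD}: part (i) is done there by the algebraic identity $\bar\bx\wedge\bzero = \overline{(\bx\wedge\bzero)}\wedge\bzero$ rather than by pushing $\nu$ forward under $\pi_J$, part (iii) by the analogous identity $(\bx+\bu)\wedge\bzero = (\bx\wedge\bzero+\bu)\wedge\bzero$, and part (iv) by threshold-stability (T2) plus the continuous mapping theorem rather than by your compactness-and-determinacy argument. The largest divergence is (v): the paper invokes Theorem~\ref{thm:representation} and mixes the spectral cdf-s $F_{R i}$ inside the \GPr\ integral, while you mix the normalized intensity measures $\nu^{(\theta)}/v^{(\theta)}$ directly; your version is arguably cleaner and avoids forward-referencing Section~4, though you should verify that the mixture measure is finite on sets bounded away from $\bEta$ (this follows from the uniform marginal bound $\nu^{(\theta)}(\{y_j > x_j\})/v^{(\theta)} \le (1+\gamma_j x_j/\sigma_j)^{-1/\gamma_j}$, which depends only on the common $(\gamma_j,\sigma_j)$). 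Your determinacy argument for (vii) (margins plus Fr\'echet standardization plus homogeneity) replaces the paper's appendix proof (direct propagation via $G(\ba_t\bx+\bb_t)^t = G(\bx)$); both are fine.

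There is one genuine logical gap, in (viii): you conclude that the conditioned-and-rescaled law ``shares $(\bgamma,\bsigma)$ with $H$ and hence agrees with $H$.'' Matching marginal parameters does not determine a multivariate GP distribution --- the independence and comonotone examples of Section~2 share $(\bgamma,\bsigma)$ but are different distributions --- so this step as written proves nothing about the dependence structure. The repair is available inside your own framework: the composite map $\by\mapsto(\by-\bu_t)/\bs_t$ is exactly $\phi_t$, so it pushes $\nu$ to $t^{-1}\nu$, whose normalized restriction to $\{\by\nleq\bzero\}$ is again $H$; equivalently, the conditioned-rescaled law is associated with $G^{1/t}$ and one invokes (vi), which is precisely what the paper does. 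Relatedly, your (ix) handles the components with $\sigma_j>0$ cleanly via \eqref{eq:gpmarginspos} but only gestures at the components with $\sigma_j\le 0$; since the theorem's later parts that use (ix) all assume $\bsigma>\bzero$, this is a minor omission, but it should be stated rather than waved at.
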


In words, Theorem~\ref{prop:GPproperties}(i) says that conditional margins of GP distributions are GP, but that marginal distributions of GP distribution are typically not GP. For instance, if $H$ is a two-dimensional GP cdf, then $H_1^+$ is a one-dimensional GP cdf (given by \eqref{eq:gpmarginspos}), but typically $H_1$ is not. Intuitively, the reason is that the conditioning event implicit in $H_1(x)$ also includes the possibility that it is the second component, rather than the first one, that exceeds its threshold. Theorem~\ref{prop:GPproperties} (ii)$-$(v) also establish closure properties of the class of GP distributions. By (vi) and (vii) a GEV distribution specifies the associated GP distribution and conversely a GP distribution specifies a curve of associated GEV distributions in the space of distribution functions. Regarding (vii), note that a GEV distribution $G$ such that $0 < G_j(0) < 1$ for all $j$ is determined by its values for $\bx \ge \bzero$ (proof in the Appendix). Finally, (viii) identifies the affine transformations which leave $H$ unchanged, and (ix) establishes identifiability of the marginal parameters.

\begin{proof}
(i) Let $\bar\bzero$ denote $\bar\bx$ for the special case when $\bx = \bzero \in (-\infty, \infty)^J$ and let $G_J(\bx)=G(\bar\bx)$ be the marginal distribution of $G$.  Clearly $\bar\bx\wedge \bzero =\overline{ (\bx \wedge \bzero)} \wedge \bzero$ and hence, for $\bx > \bEta$,
\begin{eqnarray*}
  \lefteqn{
    H_J(\bx)-H_J(\bx \wedge \bzero)
  } \\
  &=& \dfrac{1}{\log G( \bzero )} \log \left( \dfrac{ G( \bar\bx \wedge \bzero ) }{ G( \bar\bx) }  \right) - \dfrac{1}{\log G( \bzero )} \log \left( \dfrac{ G( \overline{(\bx \wedge \bzero)} \wedge \bzero ) }{ G(\overline{\bx \wedge \bzero}) }  \right) \\
         &=& \dfrac{1}{\log G( \bzero )} \log \left( \dfrac{ G_J( \bx \wedge \bzero ) }{ G_J( \bx ) } \right)
\end{eqnarray*}
  and
\[
 \bar{H}_J(\bzero) = 1- \dfrac{1}{\log G( \bzero )} \log \left( \dfrac{ G( \bar\bzero \wedge \bzero) }{ G( \bar\bzero) }\right)
= \dfrac{\log   G_J( \bzero)}{\log G( \bzero )},
\]
so that
$$
H_J^+(\bx) = \dfrac{1}{\log G_J( \bzero )} \log \left( \dfrac{ G_J( \bx \wedge \bzero ) }{ G_J( \bx ) } \right).
$$
Inserting \eqref{eq:gmarginals} into the equation above for $J = \{j\}$  together with straightforward calculation proves \eqref{eq:gpmarginspos}, and hence completes the proof of the first  assertion.

If $H_J(\bzero)=0$,  then $H_J=H^+_J$ and it follows from the first assertion  that $H_J$ is a GP distribution function. Further, GP distributions are supported by $\{\by; \by \nleq \bzero\}$ and hence if $H_J(\bzero)> 0$ then $H_J$ is not a GP cdf. This proves the second assertion.

(ii) If $G$ is a GEV cdf then, for $\bs > \bzero$, the map $\bx \mapsto G(\bx/\bs)$ is a GEV cdf too, and the result then follows from
\[
  H(\bx/\bs)
  = \frac{1}{\log G(\bzero)} \log \left( \frac{G((\bx /\bs) \wedge \bzero)}{\log G(\bx/\bs)}\right)
  = \frac{1}{\log G(\bzero/\bs)} \log \left( \frac{G((\bx \wedge \bzero)/\bs)}{G(\bx/\bs)}\right).
\]

(iii)   Proceeding as in the proof of (i), but in the first step instead using that  $(\bx  +\bu)\wedge \bzero  = (\bx \wedge \bzero   +\bu) \wedge \bzero$,  shows that the conditional distribution of $\bX - \bu$ given that $\bX \nleq \bu$ is
\begin{equation*}
  \frac{H(\bx + \bu) - H(\bx \wedge \bzero + \bu)}{\bar{H}(\bu)}
  = \frac{1}{\log G(\bu)} \log \left( \frac{G(\bx  \wedge \bzero  +\bu)}{G(\bx + \bu)}\right).
\end{equation*}
The map $\bx \mapsto \tilde{G}(\bx) = G(\bx + \bu)$ is also a GEV cdf, but with the vector $\bsigma = \balpha - \bgamma \bmu$ replaced by $\tilde{\bsigma} = \balpha - \bgamma (\bmu - \bu) = \bsigma + \bgamma \bu$.

(iv) Convergence in distribution in $\reals^d$ implies convergence of the marginal distributions, and using standard converging subsequence arguments it follows from marginal convergence that there exist $\bsigma > \bzero$ and $\bgamma$ such that $\bsigma_n \to \bsigma$ and $\bgamma_n \to \bgamma$.  Define $\bu_{n,t}$ and $\bs_{n,t}$ from $H_n$ as in Equation~\ref{eq:equality} (vi). Then, since $H_n$ is a GP cdf we have, using first (T2) and (viii), and then the continuous mapping theorem, that
  \begin{align*}
    H_n(\bx)
    &= \frac{H_n(\bx/\bs_{n,t} + \bu_{n,t}) - H_n((\bx/\bs_{n,t}) \wedge \bzero + \bu_{n,t})}{\bar{H}_n(\bu_{n,t})}  \\
    &\stackrel{d}{\to} \frac{\tilde{H}(\bx/\bs_t + \bu_t) - \tilde{H}((\bx/\bs_t) \wedge \bzero + \bu_t)}{\bar{\tilde{H}}(\bu_t)},
    \qquad \text{as}\ n \to \infty.
  \end{align*}
Since $H_n \stackrel{d}{\to} \tilde{H}$ it follows that $\tilde{H}$ satisfies (T2) and hence is a GP cdf.

(v)  We only prove that a mixture of two GP cdf-s with the same $\bsigma$ and $\bgamma$ is a GP cdf too, using Theorem~\ref{thm:representation} below (the proof of that theorem does not use the result we are proving now). The proof for arbitrary finite mixtures is the same, and the result for infinite mixtures then follows by taking limits of finite mixtures and using (iv). Let $H_1$ and $H_2$ be GP cdf-s with the same marginal parameters $\bsigma, \bgamma$ and let $p \in (0,1)$. By Theorem~\ref{thm:representation} and Equation~\eqref{eq:MGP:R} there exists cdf-s $F_i=F_{Ri}$ such that $H_i(\bx) = c_i \int_0^\infty \{F_{i}(t^{\bgamma}(\bx+\frac{ \bsigma}{ \bgamma})) - F_{i}(t^{\bgamma}(\bx \wedge \bzero + \frac{ \bsigma}{ \bgamma}))\} \, \diff t $ with $c_i=1/\int_0^\infty \bar F_i(t^{\bgamma}{\frac{ \bsigma}{ \bgamma}}) \, \diff t$, and with the convention that if $\gamma_i=0$ then $t^{\gamma_i}(x_i+\frac{ \sigma_i}{ \gamma_i})$ is interpreted to mean $x_i+\sigma_i \log t$. Writing $F=\frac{pc_1}{pc_1+(1-p)c_2}F_1 + \frac{(1-p)c_2}{pc_1+(1-p)c_2}F_2$ it follows that
\begin{eqnarray*}
  \tilde{H}(\bx)&:=&pH_1(\bx)+(1-p)H_2(\bx)\\
  &=&
  [pc_1+(1-p)c_2]
  \int_0^\infty
    \left\{
      F \left( t^{\bgamma} (\bx + \tfrac{ \bsigma}{ \bgamma}) \right) -
      F \left( t^{\bgamma} (\bx \wedge \bzero + \tfrac{ \bsigma}{ \bgamma}) \right)
    \right\} \,
  \diff t.
\end{eqnarray*}
Straightforward calculation shows that $pc_1+(1-p)c_2=1/\int_0^\infty \bar F(t^{\bgamma}{\frac{ \bsigma}{ \bgamma}}) \, \diff t$ so that $\tilde{H}(\bx)$ satisfies Equation~\eqref{eq:MGP:R} and hence is a GP cdf.

(vi) The first assertion follows from \eqref{eq:MGPD}. Choose $t_1$ so that $- \log G(\bzero)^{t_1} = - \log G_*(\bzero)$.  Then $H \leftrightarrow G$ and $H \leftrightarrow G_*$ imply together that
  \begin{equation*}
  \frac{G(\bx \wedge \bzero)^{t_1}}{G(\bx)^{t_1} } = \frac{G_*(\bx \wedge \bzero)}{G_*(\bx) },
  \end{equation*}
and in particular, that $G(\bx)^{t_1} = G_*(\bx)$ for $\bx\geq \bzero$. Since a GEV cdf with $ \bsigma > \bzero$ is determined by its values for $\bx \geq \bzero$, see the Appendix, this completes the proof.

(vii) The first part follows from \eqref{eq:MGPD}, and that this determines $G$ again follows from the appendix.

(viii) By the proofs of (iii) and (vi) the conditional distribution of $\bs_t^{-1}(\bX - \bu_t)$ given that $\bX \nleq \bu_t$ is associated with $G(\bs_t \bx + \bu_t ) = G(\bx)^{1/t}$,  and the result follows from (vii).

(ix) For $t > 0$, let $(\gamma_j(t), \mu_j(t), \alpha_j(t)) \in \reals \times \reals \times (0, \infty)$ be the parameter vector of $G_j^t$, and let $\sigma_j(t) = \alpha_j(t) - \gamma_j(t) \, \mu_j(t)$. By assertion (vi), the GPD $H$ determines the curve of GEV-s $G^t$ for $t > 0$. It suffices to show that $\gamma_j(t)$ and $\sigma_j(t)$ do not depend on $t$. But this follows by straightforward calculations from the max-stability property $G(\bx)^t = G( \ba_t^{-1}( \bx - \bb_t) )$ with $\ba_t$ and $\bb_t$ as in \eqref{eq:tparameters}.
\end{proof}

Example \ref{ex:independentFrechet}  below exhibits two-dimensional GP distributions with positive mass on certain lines, and the first part of Example~\ref{ex:comonotoneFrechet}  provides a cdf where the second assertion in (i) of Theorem~\ref{prop:GPproperties} comes into play. In contrast to  scale transformations, it seems likely that if $\bsigma > \bzero$ then a non-trivial location transformation of a GP cdf never is a GP cdf. The second part of Example~\ref{ex:comonotoneFrechet} shows one of the exceptional cases where the support of one of the components is contained in $(-\infty, 0)$ and where  a location transformation of a GP distribution does give another GP distribution.

\begin{example}
\label{ex:independentFrechet}
This example rectifies the one on pages 1726--1727 in \cite{ferreira2014}. Let $G(x, y) = \exp\{ - 1/(x+1) - 1/(y+1) \}$ for $(x, y) \in (-1, \infty)^2$, the distribution of two independent unit Fr\'echet random variables with lower endpoints $\alpha_1 = \alpha_2 = -1$. The corresponding multivariate generalized Pareto distribution is given by
\begin{equation}
\label{eq:independentFrechet}
  H(x, y) =
  \begin{cases}
    \frac{1}{2} \left( 1 - \frac{1}{x+1} + 1 - \frac{1}{y+1} \right)
    & \text{if $(x, y) \in [0, \infty)^2$,} \\[1ex]
    \frac{1}{2} \left( 1 - \frac{1}{x+1} \right)
    & \text{if $(x, y) \in [0, \infty) \times [-1, 0]$,} \\[1ex]
    \frac{1}{2} \left( 1 - \frac{1}{y+1} \right)
    & \text{if $(x, y) \in [-1, 0] \times [0, \infty)$,} \\[1ex]
    0
    & \text{otherwise.}
  \end{cases}
\end{equation}
We conclude that $H$ is the distribution function of the random vector $(X, Y)$ given by
\[
  (X, Y) =
  \begin{cases}
    (-1, T) & \text{with probability $1/2$,} \\
    (T, -1) & \text{with probability $1/2$,}
  \end{cases}
\]
where $T$ is generalized Pareto,  $\Pr(T \le t) = 1 - 1/(t+1)$ for $t \geq 0$. Hence $H$ is supported by the union of the two lines $\{-1\} \times (0, \infty)$ and $(0, \infty) \times \{-1\}$, see  Figure~\ref{fig:support}, left panel.

\begin{figure}
\begin{center}
\begin{tabular}{cc}
\includegraphics[width=0.3\textwidth]{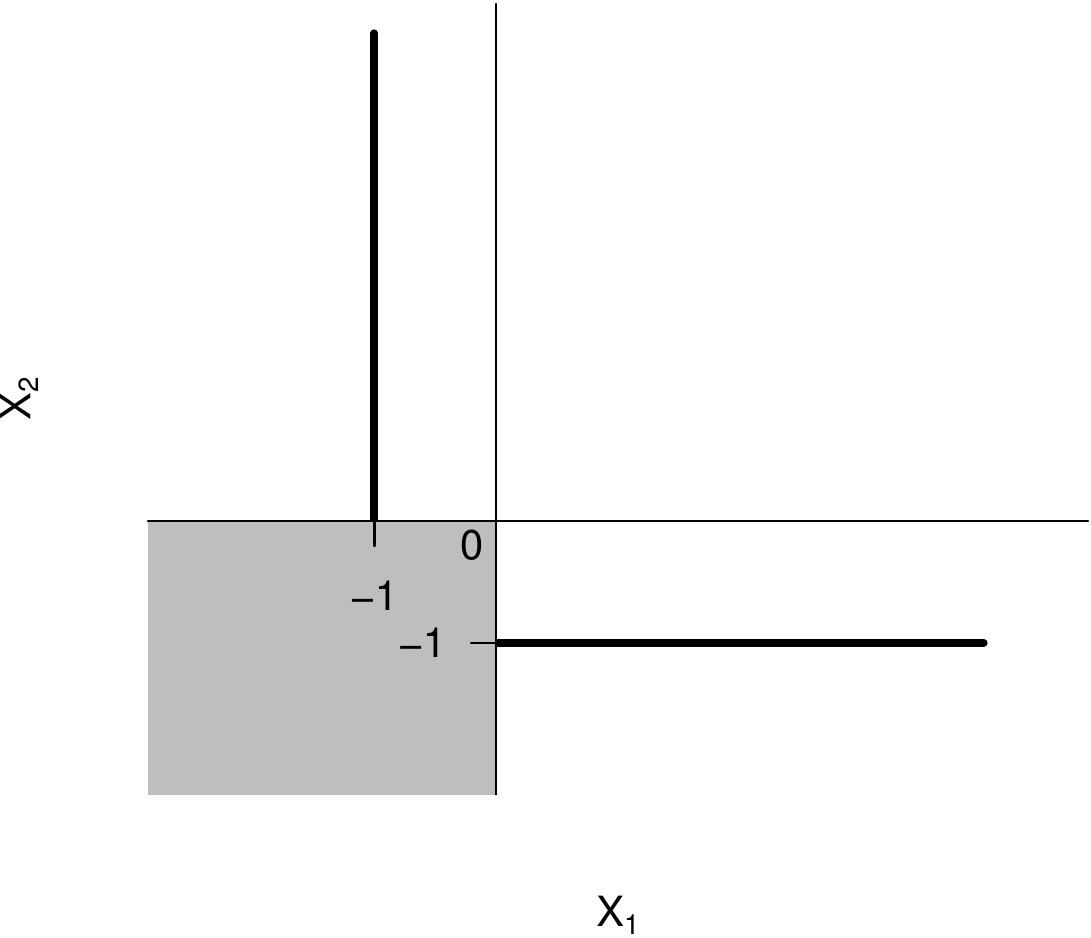}&
\includegraphics[width=0.3\textwidth]{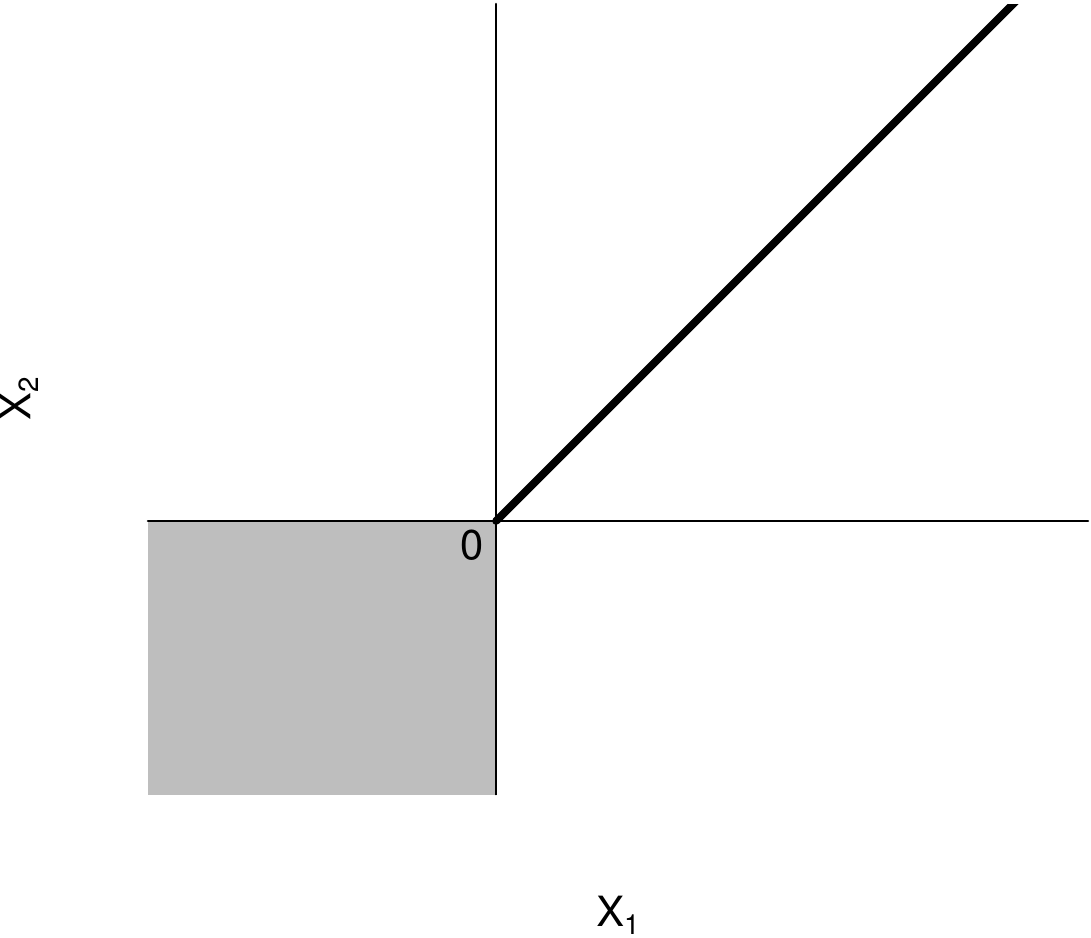}
\includegraphics[width=0.3\textwidth]{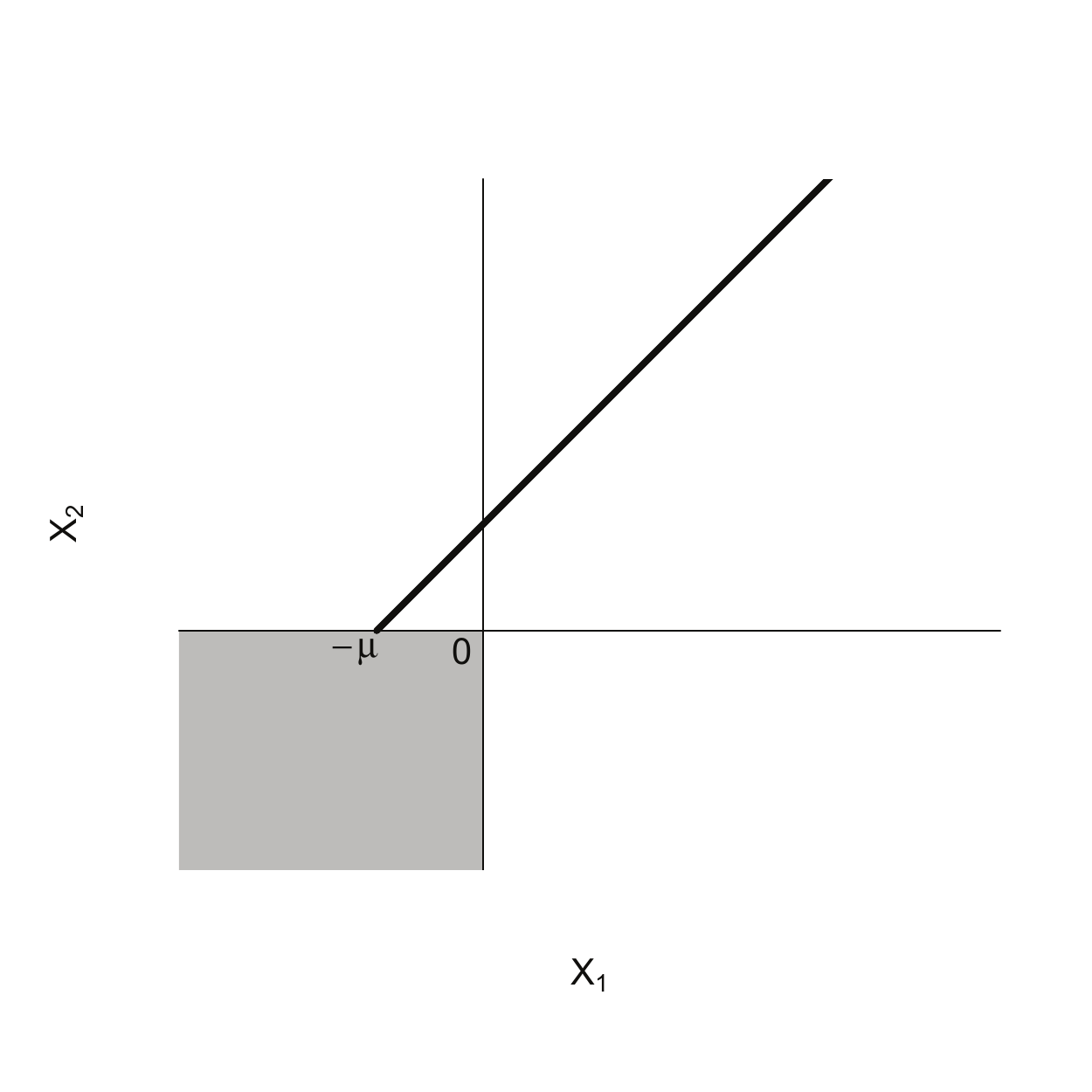}
\end{tabular}
\end{center}
\caption{\label{fig:support} \it Supports (solid lines) of the GP distributions $H$ in equations \eqref{eq:independentFrechet} (left), \eqref{eq:dependentFrechet} (middle) and \eqref{eq:locationshift} (right). }
\end{figure}

If we modify the example by choosing Gumbel rather than Fr\'echet margins, so that $G(x, y) = \exp( -e^{-x} - e^{-y} )$ for $(x, y) \in \reals^2$, then the GP cdf $H$ is the cdf of the vector
\[
  (X, Y) =
  \begin{cases}
    (-\infty, T) & \text{with probability $1/2$,} \\
    (T, -\infty) & \text{with probability $1/2$,}
  \end{cases}
\]
where $T$ is a unit exponential random variable, $\Pr(T \le t) = 1 - e^{-t}$ for $t \in [0, \infty)$. The support of $H$ is  the union of two lines $\{-\infty\} \times (0, \infty)$ and $(0, \infty) \times \{-\infty\}$ through $-\infty$.
\end{example}

\begin{example}
\label{ex:comonotoneFrechet}
Let $G(x, y) = \exp[ - 1 / \{ (x \wedge y) + 1 \} ]$ for $(x, y) \geq -\bone$, the cdf of  $(Z, Z)$ for $Z$   unit Fr\'echet  with lower endpoint $-1$. The corresponding GP cdf is
\begin{equation}
\label{eq:dependentFrechet}
  H(x, y) =
  \begin{cases}
    1 - \frac{1}{(x \wedge y) + 1}
      & \text{if $(x, y) \in [0, \infty)^2$,} \\[1ex]
    0
      & \text{otherwise.}
  \end{cases}
\end{equation}
We identify $H$ as the distribution of the random pair $(T, T)$, where $\Pr(T \le t) = 1 - 1/(t+1)$ for $t \in [0, \infty)$. The support of $H$ is the diagonal $\{ (t, t) : 0 < t < \infty \}$, see  Figure~\ref{fig:support}, middle panel. It follows that, e.g.,  $H_1(0)=0$ and hence in this example $H_1=H^+_1$.

As a variation of the example  let $G(x, y) = \exp[ - e^ {- x \wedge (y+\mu)} ]$  be the cdf of $(Z, Z -\mu)$, with $Z$ standard Gumbel and $\mu >0$. The corresponding GP cdf is
\begin{equation}\label{eq:locationshift}
  H(x,y)  = e^{-(x \wedge 0) \wedge (y \wedge 0 + \mu)} - e^{-x \wedge (y  + \mu)} = e^{-(x \wedge 0) \wedge (y + \mu)} - e^{-x \wedge (y  + \mu)},
\end{equation}
  and $H$ is the cdf of $(T, T- \mu)$ with $T$  standard exponential.
  Now, for $-\mu < \nu$ the location transformed cdf $H(x,y+\nu)$ equals $e^{-(x \wedge 0) \wedge (y + \nu +\mu)} - e^{-x \wedge (y  + \nu +\mu)}$, which is the same as in \eqref{eq:locationshift}, but with $\mu$ replaced by $\nu + \mu >0$. Hence also $H(x,y+\nu)$ is a GP cdf. The support of $H$ is shown in the right-hand panel of Figure \ref{fig:support}.
\end{example}

%
%
%

\section{Point processes of extreme episodes}
\label{sec:point processes}

The first step in our program for PoT inference is to specify a point process model for extreme episodes.  This model exhibits extreme episodes as a product process obtained by multiplying a random vector, the ``shape'' vector, with a random quantity, the ``intensity'' of the episode. (For $\gamma = 0$, the model instead is a sum.)  This is parallel to models commonly used for max-stable processes, see e.g.\ \citet{schlather2002}. In Section~\ref{sec:representation} below we obtain basic and physically interpretable representations of the GP distributions by conditioning the product process of  extreme episodes on threshold exceedance.

In this and subsequent sections we assume that $\bsigma > \bzero$. Let $\bX_1, \bX_2, \ldots$ be i.i.d.\ random vectors with cdf $F$ and marginal cdf-s $F_1, \ldots, F_d$, and let $\ba_n, \bb_n$ be as in \eqref{eq:DA}. Further, let $\bEta$ be the vector of lower endpoints of the limiting GEV distribution, see \eqref{eq:Gsupport} and the sentences right below it. We consider weak limits of the point processes
\[
  N_n=\sum_{i=1}^n \delta_{\ba_n^{-1}(\bX_i-\bb_n)},
\]
where $\delta_{\bx}$  denotes a point mass at $\bx$. Define
$I_j = [-\sigma_j/\gamma_j, \infty)$ or $[-\infty, \infty)$ or $[-\infty, -\sigma_j/\gamma_j)$ according to whether $\gamma_j > 0$ or $\gamma_j = 0$ or $\gamma_j < 0$, and set
\begin{equation*}
\bar{S}_{\bgamma}= I_1 \times \cdots \times I_d \; \;\; \mbox{and} \;\;\;
S_{\bgamma}= \bar{S}_{\bgamma} \setminus \{\bEta\}.
\end{equation*}
The limit point process is specified as follows:  Let $0 < T_1 < T_2 < \ldots$ be the points of a Poisson process on $[0, \infty)$ with unit intensity and let  $( \bR_i )_{i \ge 1}$ be independent copies of a random vector $\bR $ which satisfies Condition~\ref{cond:R} below. Further assume that the vectors $(\bR_i)_{i \ge 1}$  are independent of $(T_i)_{i \ge 1}$, and define the point process
\begin{equation}
\label{eq:PPV}
  P_r = \sum_{i \ge 1} \delta_{({\bR_i}/T_i^\bgamma-\bsigma/\bgamma)},
\end{equation}
where, by convention, $R_{i, j}/T_i^0-\sigma_j/0$ is interpreted to mean $R_{i,j} - \sigma_j \log T$. The condition on $\bR$ is as follows.

\begin{condition} \label{cond:R}
The components of the random vector $\bR$  satisfy $R_j \in [0, \infty) $ if $\gamma_j >0$,  $R_j \in [-\infty, \infty)$ if $\gamma_j =0$, and  $R_j \in [-\infty, 0)$ if $\gamma_j <0$, and furthermore $0 < \Exp[ \abs{R_j}^{1/\gamma_j} ] < \infty$ if $\gamma_j \neq0$ and $\Exp[\exp (R_j/\sigma_j)] < \infty$ if $\gamma_j=0$, for $j=1, \ldots, d$.
\end{condition}

Let $F_{\bR}$ be the cdf of $\bR$. For $\gamma_j \neq 0$, the moment restriction in Condition~\ref{cond:R} can be seen to be equivalent to requiring that $0<\int_{0}^{\infty} \Pr(R_j>t^{\gamma_j} x_j) \, \diff t < \infty$, if $x_j\in (0, \infty)$ and $\gamma_j>0$ or if $x_j\in (-\infty, 0)$ and $\gamma_j<0$.
For $\gamma_j = 0$, the moment condition is instead equivalent to $0<\int_{0}^{\infty} \Pr(R_j>\sigma_j \log t + x_j) \, \diff t < \infty$, for $x_j \in (-\infty , \infty)$.  For example, if $\gamma_j < 0$, then $\int_{0}^{\infty}P(R_j > t^{\gamma_j} x_j)  \, \diff t = \int_{0}^{\infty} \Pr(\abs{R_j}^{1/\gamma_j}>t) \, \diff t \, \abs{x_j}^{-1/\gamma_j} = \Exp(\abs{R_j}^{1/\gamma_j}) \, \abs{x_j}^{-1/\gamma_j}$. Since $\Pr(R_j>t^{\gamma_j} x_j) \leq \bar F_{\bR}(t^{\bgamma}\bx) \leq \sum_{i=1}^{d} \Pr(R_i>t^{\gamma_i} x_i)$, it in turn follows that the moment condition implies that $0 < \int_{0}^{\infty}\bar{F}_{\bR}(t^{\bgamma}(\bx + \bsigma/\bgamma)) \, \diff t < \infty$,  if the components $x_j$ of $\bx$ are as above, and  where we have used the convention that $t^0 (x_j + \sigma_j/0)$  should be replaced by  $\sigma_j \log t + x_j$.

\begin{theorem}
\label{thm:PPconv}
Suppose $F$ satisfies \eqref{eq:DA}. Then, for some $\bR$ which satisfies Condition~\ref{cond:R},
\begin{equation}
\label{eq:PPconv}
 N_n \stackrel{d}{\to} P_r \; \; \text{on} \; \; S_{\bgamma}, \; \; \text{as} \; \; n \to \infty.
\end{equation}
Conversely, for any $P_r$ given by \eqref{eq:PPV} there exist a GEV cdf $G$  and  $\ba_n > \bzero$ and $\bb_n$, with $0 < G_j(b_{n,j}) < 1$ and $G_j(b_{n,j}) \to 1$ for  $j = 1, \ldots, d$, such that \eqref{eq:PPconv} holds for $F=G$.
\end{theorem}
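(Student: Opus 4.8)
The plan is to obtain both directions from the classical point-process limit theorem for i.i.d.\ maxima, combined with a homogeneity (polar) decomposition of the intensity measure $\nu$ of $G$, characterised by $-\log G(\bx)=\nu(\{\by:\by\nleq\bx\})$; the scaling group governing the decomposition is the one read off from \eqref{eq:tparameters}. Write $\psi(t,\br)=\br/t^{\bgamma}-\bsigma/\bgamma$ for the map in \eqref{eq:PPV} (with the usual $\gamma_j=0$ reading) and $\Lambda_t(\by)=t^{-\bgamma}(\by+\bsigma/\bgamma)-\bsigma/\bgamma$, so that $\Lambda_t(\bw)=\psi(t,\bw+\bsigma/\bgamma)$, $\Lambda_s\circ\Lambda_t=\Lambda_{st}$, and $\Lambda_s\circ\psi(t,\cdot)=\psi(st,\cdot)$.

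\emph{Forward direction.} Assume $F$ satisfies \eqref{eq:DA}. By the standard point-process convergence for normalised i.i.d.\ samples \citep[cf.][Prop.~3.21 and Ch.~5]{resnick1987}, $N_n\dto N$ on $S_{\bgamma}$, where $N$ is the Poisson random measure with mean measure $\nu$. By the mapping theorem for Poisson processes, $P_r$ in \eqref{eq:PPV} is the Poisson random measure with mean measure $\mu_{\bR}:=\psi_{*}(\diff t\otimes F_{\bR})$, so it suffices to produce $\bR$ satisfying Condition~\ref{cond:R} with $\mu_{\bR}=\nu$. Max-stability of $G$ with parameters \eqref{eq:tparameters} is exactly the scaling relation $\nu\circ\Lambda_t^{-1}=t^{-1}\nu$, $t>0$. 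Choosing the cross-section $C=\{\by\in S_{\bgamma}:\max_j y_j=0\}$ — which every $\Lambda$-orbit meets exactly once, since along an orbit $\max_j(\Lambda_t^{-1}\by)_j$ moves continuously and strictly through $0$ — the map $(t,\bw)\mapsto\Lambda_t(\bw)$ is a Borel isomorphism of $(0,\infty)\times C$ onto $S_{\bgamma}$, and the scaling relation together with the scale-covariance of Lebesgue measure forces $\nu$ to be the image of $\diff t\otimes\rho$ under it, for a unique finite measure $\rho$ on $C$ with $\rho(C)=\nu(\{\by:\by\nleq\bzero\})=-\log G(\bzero)\in(0,\infty)$. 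Since $\Lambda_t(\bw)=\psi(t,\bw+\bsigma/\bgamma)$ and replacing $\bR$ by $c^{\bgamma}\bR$ multiplies $\mu_{\bR}$ by $c$, the choice $\bR:=\norm{\rho}^{\bgamma}(\bW+\bsigma/\bgamma)$ with $\bW$ distributed as $\rho/\norm{\rho}$ gives $\mu_{\bR}=\nu$. Finally the moment bounds in Condition~\ref{cond:R} hold by the equivalences recorded just before the theorem: they amount to saying that $t\mapsto\bar F_{\bR}(t^{\bgamma}(\bx+\bsigma/\bgamma))$ is integrable and nonzero, i.e.\ that $\nu=\mu_{\bR}$ is a nonzero Radon measure on $S_{\bgamma}$ — which holds because $G$ is a GEV with $0<G(\bzero)<1$ and non-degenerate margins.

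\emph{Converse direction.} Given $P_r$ as in \eqref{eq:PPV} with $\bR$ satisfying Condition~\ref{cond:R}, let $\mu:=\psi_{*}(\diff t\otimes F_{\bR})$ be its mean measure. The displayed estimates before the theorem give $0<\mu(\{\by:y_j>x_j\})<\infty$ for $x_j$ interior to the relevant marginal interval, and hence $0<\mu(\{\by:\by\nleq\bx\})<\infty$ for $\bx$ interior to $\bar S_{\bgamma}$; thus $\mu$ is a nonzero Radon measure on $S_{\bgamma}$ and $P_r$ is a.s.\ locally finite there. Put $G(\bx):=\exp\{-\mu(\{\by:\by\nleq\bx\})\}$. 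Computing $\mu(\{\by:y_j>x_j\})$ using Condition~\ref{cond:R} shows the margins of $G$ have the form \eqref{eq:gmarginals} with the prescribed $\bgamma$ and with $\sigma$-vector $\bsigma>\bzero$ (the finite, strictly positive moments of $\bR$ forcing $\alpha_j\in(0,\infty)$, i.e.\ non-degeneracy), while $\Lambda_s\circ\psi(t,\cdot)=\psi(st,\cdot)$ and $\diff(st)=s\,\diff t$ give $\mu\circ\Lambda_s^{-1}=s^{-1}\mu$, which is equivalent to $G(\ba_t\bx+\bb_t)^t\equiv G(\bx)$ with $\ba_t,\bb_t$ as in \eqref{eq:tparameters}. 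Hence $G$ is a GEV with $0<G(\bzero)<1$ and intensity measure $\mu$. Taking $\ba_n=n^{\bgamma}$ and $\bb_n=\bsigma(n^{\bgamma}-\bone)/\bgamma$ — for which $0<G_j(b_{n,j})<1$ and $G_j(b_{n,j})\to1$, since $\eta_j<0<\omega_j$ — the cdf $F=G$ trivially satisfies \eqref{eq:DA} with these sequences, so the point-process convergence theorem yields $N_n\dto$ the Poisson random measure with mean $\mu$, which is $P_r$; this is \eqref{eq:PPconv}.

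\emph{Main obstacle.} The only genuinely non-routine step is the polar decomposition of $\nu$ in the forward direction, i.e.\ extracting $\rho$ (equivalently $\bR$) from the scaling relation, carried out uniformly over the three sign regimes for the components of $\bgamma$ and allowing $\binfty$-valued boundary coordinates (as in Example~\ref{ex:independentFrechet}); everything else rests on the classical maxima/point-process correspondence and on the change-of-variable identities already prepared in the paragraph preceding the statement. A route that sidesteps the bespoke decomposition is to transform the margins of $\bX_i$ to unit Fr\'echet, invoke the textbook homogeneity decomposition of the resulting exponent measure, and transform back — at the cost of extra bookkeeping.
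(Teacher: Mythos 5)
Your proof is correct, and its forward direction takes a genuinely different route from the paper's. The paper transforms $\bY \sim G$ to standard Fr\'echet margins and invokes Penrose's (1992) spectral representation theorem to produce $\bR^*$ with $\sup_{i}\bR_i^*/T_i \stackrel{d}{=} \bY^*$, then transforms back to obtain $\bR$ satisfying Condition~\ref{cond:R}; you instead stay on the real scale and extract $\bR$ directly by disintegrating the exponent measure $\nu$ along the orbits of the scaling group $\Lambda_t$, using the cross-section $C=\{\by : \max_j y_j = 0\}$. The two are morally the same — Penrose's theorem is itself proved by such a polar decomposition, and your closing remark describes the paper's route almost verbatim — but your version makes the decomposition explicit in the original coordinates and, as a bonus, the normalized measure $\rho/\norm{\rho}$ on $C$ is (up to the change of scale) the law of the spectral vector $\bS$ of the $(S)$-representation, so your construction anticipates Section~\ref{sec:representation}. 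The cost is that the disintegration step (``the scaling relation plus scale-covariance of Lebesgue measure forces the product form'') is asserted rather than proved; it is standard, and the paper outsources the equivalent step to the literature, so this is a fair trade — though if you wanted a self-contained argument you would need to write out the definition $\rho(B)=\nu(\{\Lambda_t(\bw): t\in(0,1],\, \bw\in B\})$ and verify the product identity on a generating class, handling the degenerate boundary coordinates. The remaining ingredients coincide in both proofs: the equivalence of \eqref{eq:DA} with vague convergence of $n \Pr[\ba_n^{-1}(\bX_1-\bb_n)\in\point]$ to $\nu$ together with Resnick's point-process convergence theorem for the forward claim; and, for the converse, defining $G$ from the mean measure of $P_r$, checking max-stability from the scaling identity $\Lambda_s\circ\psi(t,\point)=\psi(st,\point)$ (the paper cites Schlather for this calculation), and feeding $F=G$ with $\ba_n=n^{\bgamma}$, $\bb_n=\bsigma(n^{\bgamma}-\bone)/\bgamma$ back into the forward direction.
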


\begin{proof}

Let $\bY \sim G$ and define $\bY^*$ by $Y^*_j = (1+\frac{\gamma_j}{\alpha_j}(Y_j-\mu_j))^{1/\gamma_j} $ if $\gamma_j \ne 0$ and $ Y^*_j = \exp\{(Y_j-\mu_j)/\alpha_j\}$ if $\gamma_j = 0$, for $\bmu, \balpha, \bgamma$  given by \eqref{eq:gmarginals} so that the  marginal cdf-s of $\bY^*$ are standard Fr\'echet.   It follows as in Theorem~5 of \citet{penrose1992} (see also \cite{dehaanferreira2006} and \citet{schlather2002}) that there exists a random vector $\bR^* \in [0, \infty)^d$ with $\Exp(R_j^*)< \infty$  such that $\bY^*$ has the same distribution as $\sup_{i \ge 1} \bR_i^*/T_i$ where the random vectors $\bR^*_i$ are i.i.d.\ copies of $\bR^*$ and independent of the unit rate Poisson process $(T_i)_{i \ge 1}$. Reversing the transformation which led from $\bY$ to $\bY^*$, it follows that $\bY$ has the same distribution as $\sup_{i \ge 1} (\frac{\balpha}{\bgamma}(\bR_i^*)^{\bgamma}/T_i^{\bgamma} -  \tfrac{\bsigma}{\bgamma})$. Setting $\bR = \frac{\balpha}{\bgamma}(\bR^*)^{\bgamma}$ it follow that $\bY$ has the same distribution as $\sup_{i \ge 1} (\bR_i/T_i^{\bgamma} -  \tfrac{\bsigma}{\bgamma})$ with the $\bR_i$ satisfying Condition~\ref{cond:R}, and where throughout expressions should be interpreted as specified after Equation~\ref{eq:PPV} for $\gamma_j =0$.

For $\nu$ the intensity measure of $P_r$, we have $G(\bx)=\exp\{-\nu(\{\by : \by \nleq \bx\})\}$. By standard reasoning, convergence in distribution of $\ba_n^{-1} (\bigvee_{i=1}^n \bX_i - \bb_n)$ is equivalent to $n \Pr[\ba_n^{-1} (\bX_1 - \bb_n) \nleq \bx] \to - \log G(\bx) = \nu(\{\by : \by \nleq \bx\})$, which implies that $n \Pr[\ba_n^{-1} (\bX_1 - \bb_n) \in \point]$ converges vaguely to $\nu(\point)$ on $S_{\bgamma}$. By Theorem~5.3 of \citet{resnick2007} this proves \eqref{eq:PPconv}.

Conversely,  given $P_r$, define the cdf $G$ by $G(\bx) = \Pr [ \sup_{i \ge 1} (\bR_i/T_i^\bgamma -  \tfrac{\bsigma}{\bgamma}) \leq \bx ]$.
Straightforward calculation as  in \citet[Theorem~2]{schlather2002}, show that $G$ is max-stable. Hence there exist sequences $\ba_n$ and $\bb_n$ with the stated properties such that for independent random vectors $\bX_1, \bX_2, \ldots$ with common distribution $G$, the distribution of $\ba_n^{-1}(\bigvee_{i=1}^n \bX_i-\bb_n)$ is equal to $G$ too. By the first part of the proof, this proves \eqref{eq:PPconv}.
\end{proof}

In the proof of Theorem \ref{thm:PPconv} we obtained part of the following result, which we record here for completeness.

\begin{corollary}
\label{cor:MGEV}
Suppose $\bR$ satisfies Condition~\ref{cond:R}. Then $G(\bx) = \Pr[\sup_{i \ge 1} (\bR_i/T_i^\bgamma -  \tfrac{\bsigma}{\bgamma})\leq \bx]$ is a GEV cdf and
\begin{equation}\label{eq:GEV-int}
  G(\bx) =
  \exp\left\{ - \int_0^\infty \bar{F}_{\bR} \left( t^{\bgamma} \left( \bx + \tfrac{\bsigma}{\bgamma} \right) \right) \, \diff t\right\}
  \; \;\; \text{for} \;\;\; \bx \in S_{\bgamma},
\end{equation}
and to any GEV cdf there exists an $\bR$ which satisfies this equation. Here we  use the convention that $t^0 (x_j + \sigma_j/0)$  should be replaced by  $\sigma_j \log t + x_j$.
\end{corollary}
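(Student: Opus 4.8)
The plan is to read everything off the Poisson structure of the point process $P_r$ in~\eqref{eq:PPV}; the only genuinely new ingredient, relative to the proof of Theorem~\ref{thm:PPconv}, is the closed form~\eqref{eq:GEV-int}, and the other two assertions are recycled from that proof. So I would first establish~\eqref{eq:GEV-int}. Fix $\bx\in S_{\bgamma}$. The event $\{\sup_{i\ge1}(\bR_i/T_i^{\bgamma}-\bsigma/\bgamma)\le\bx\}$ is exactly the event that $\bR_i\le T_i^{\bgamma}(\bx+\bsigma/\bgamma)$ for every $i$, where the $j$-th inequality reads $R_{i,j}\le\sigma_j\log T_i+x_j$ when $\gamma_j=0$; this componentwise rewriting is legitimate because $t\mapsto t^{\gamma_j}$ is strictly positive for every real $\gamma_j$, so multiplying through by $T_i^{\gamma_j}>0$ does not reverse inequalities. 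Conditioning on $(T_i)_{i\ge1}$ and using that the $\bR_i$ are i.i.d.\ and independent of $(T_i)_{i\ge1}$,
\[
  \Pr\bigl[\textstyle\sup_{i\ge1}(\bR_i/T_i^{\bgamma}-\bsigma/\bgamma)\le\bx \bigm| (T_i)_{i\ge1}\bigr]
  =\prod_{i\ge1}F_{\bR}\bigl(T_i^{\bgamma}(\bx+\tfrac{\bsigma}{\bgamma})\bigr)
  =\prod_{i\ge1}\bigl(1-g(T_i)\bigr),
\]
with $g(t)=\bar F_{\bR}(t^{\bgamma}(\bx+\bsigma/\bgamma))\in[0,1]$. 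The two-sided estimate on $\bar F_{\bR}(t^{\bgamma}\bx)$ and the moment bounds recorded right after Condition~\ref{cond:R} give $0<\int_0^\infty g(t)\,\diff t<\infty$ on $S_{\bgamma}$, so $\sum_{i\ge1}g(T_i)<\infty$ a.s., the product converges, and taking expectations through the probability generating functional of the unit-rate Poisson process on $[0,\infty)$, namely $\Exp\bigl[\prod_{i\ge1}(1-g(T_i))\bigr]=\exp\{-\int_0^\infty g(t)\,\diff t\}$, yields~\eqref{eq:GEV-int}. The same integral lies in $(0,\infty)$ on $S_{\bgamma}$ and tends to $0$ as the components of $\bx$ increase, so $G$ is a proper cdf with $G(\bx)\in(0,1)$ for $\bx\in S_{\bgamma}$.

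Next I would check that $G$ is GEV. Max-stability can be read off~\eqref{eq:GEV-int}: with $\ba_t=t^{\bgamma}$, $\bb_t=\bsigma(t^{\bgamma}-\bone)/\bgamma$ as in~\eqref{eq:tparameters} one has $\ba_{1/s}\bx+\bb_{1/s}+\bsigma/\bgamma=s^{-\bgamma}(\bx+\bsigma/\bgamma)$, so the substitution $t=u/s$ in the integral gives $G(\bx)^{s}=G(\ba_{1/s}\bx+\bb_{1/s})$ for every $s>0$, i.e.\ the max-stability relation $G(\ba_t\bx+\bb_t)^{t}\equiv G(\bx)$ (the $\gamma_j=0$ conventions being respected throughout); this is also the Schlather-type computation invoked in the proof of Theorem~\ref{thm:PPconv}. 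Non-degeneracy of the margins follows by evaluating the one-dimensional integrals as in the computation after Condition~\ref{cond:R}: for $\gamma_j\ne0$ one gets $G_j(x)=\exp\{-\Exp(\abs{R_j}^{1/\gamma_j})\,\abs{x+\sigma_j/\gamma_j}^{-1/\gamma_j}\}$ on the relevant half-line, and for $\gamma_j=0$ the analogous Gumbel-type cdf with $\Exp(e^{R_j/\sigma_j})$ in place of the moment; these are non-degenerate because the moments lie strictly in $(0,\infty)$ by Condition~\ref{cond:R}, and matching them to~\eqref{eq:gmarginals} recovers the stated marginal parameters.

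Finally, that every GEV cdf arises in this way is exactly the first half of the proof of Theorem~\ref{thm:PPconv}: given a GEV $G$ with margins~\eqref{eq:gmarginals}, pass to standard-Fr\'echet margins $\bY^*$, apply the spectral representation of \citet{penrose1992} (see also \citet{schlather2002}) to obtain $\bR^*\in[0,\infty)^d$ with $\Exp R_j^*<\infty$ and $\bY^*\stackrel{d}{=}\sup_{i\ge1}\bR_i^*/T_i$, and set $\bR=\tfrac{\balpha}{\bgamma}(\bR^*)^{\bgamma}$; this $\bR$ satisfies Condition~\ref{cond:R} and $\sup_{i\ge1}(\bR_i/T_i^{\bgamma}-\bsigma/\bgamma)\stackrel{d}{=}\bY\sim G$, so by the first step it satisfies~\eqref{eq:GEV-int}. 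I do not expect a serious obstacle: the real content is the single probability-generating-functional identity, and the only care needed is the bookkeeping for the $\gamma_j=0$ coordinates (the $\sigma_j\log t+x_j$ convention) and, when $\gamma_j<0$, checking that the componentwise inequalities and the direction of the bound $\Pr(R_j>t^{\gamma_j}x_j)\le\bar F_{\bR}(t^{\bgamma}\bx)$ are correctly oriented, all routine given Condition~\ref{cond:R}.
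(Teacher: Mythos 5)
Your proof is correct and follows essentially the same route as the paper's: the paper computes the intensity measure of $P_r$ on $\{\by : \by \nleq \bx\}$ as $\int_0^\infty \bar F_{\bR}(t^{\bgamma}(\bx+\bsigma/\bgamma))\,\diff t$ and identifies the right-hand side of \eqref{eq:GEV-int} as the void probability of that set, which is exactly your probability-generating-functional computation in different packaging, and it delegates both the max-stability of $G$ and the converse to the proof of Theorem~\ref{thm:PPconv}, i.e.\ to the same Schlather-type change of variables and Penrose spectral representation you spell out. Your version is merely more self-contained in verifying the integrability of $g$ and the non-degeneracy of the margins.
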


\begin{proof}
Writing $\nu$ for the intensity measure of $P_r$ we have $\nu(\{\by; \by \nleq \bx \}) = \int_0^\infty \bar{F}_{\bR} ( t^{\bgamma}( \bx + \tfrac{\bsigma}{\bgamma} )) \, \diff t$. The right-hand side of \eqref{eq:GEV-int} is therefore equal to the probability that $P_r$ has no points in the set $\{\by; \by \nleq \bx \}$. The result now follows from the proof of Theorem~\ref{thm:PPconv}.
\end{proof}


\section{Representations of multivariate GP distributions}
\label{sec:representation}

This section contains the second step in the program for PoT inference. We show how conditioning on  threshold exceedances in the point process \eqref{eq:PPV} gives four widely useful representations of the class of multivariate GP distributions. The first representation, $(R)$ is on the real scale and corresponds to the point process $P_r$ in \eqref{eq:PPV} with points obtained as products of  shape vectors and intensity variables. In the second representation, $(U)$, the basic model is constructed on a standard scale and then transformed to the real scale. The third representation, $(S)$, is equivalent to the spectral representation in \cite{ferreira2014}. A fourth representation, $(T)$, which is a variation of the $(S)$ representation, is introduced in Section~\ref{sec:densities}.

In the literature the standard scale is  chosen as one of the following: Pareto scale, $\bgamma=\bone$, uniform scale,  $\bgamma=-\bone$,  or exponential scale, $\bgamma=\bzero$. Here we choose the  $\bgamma=\bzero$ scale  because of the simple additive structure it leads to. For all four representations, it is straightforward to switch from one scale to another one.

To understand the GP representation $(R)$  we first approximate $P_r$  by a truncated point process $\bar{P}_r$ where  $\{T_i\}$  is  replaced by a unit rate Poisson process $\{\bar{T}_i\}$ on a bounded interval $[0, K]$. Recalling the  representation of $\{\bar{T}_i\}$ as a Poisson distributed number of $\text{Unif}\, [0, K]$ variables,   $\bar{P}_r$ consists of a Poisson number of vectors $\bR/\bar{T}^{\bgamma} - \tfrac{\bsigma}{\bgamma}$ with $\bar{T} \sim \text{Unif} \, [0, K]$. Thus, for large $n$, a point $\ba_n^{-1}(\bX - \bb_n)$ in $N_n$ has approximately the same distribution as $\bR/\bar{T}^{\bgamma} - \tfrac{\bsigma}{\bgamma}$. Hence, by \eqref{eq:conditioning},
\begin{eqnarray}
\label{eq:smgpdappr}
  \Pr[\ba_n^{-1}(\bX - \bb_n) \leq \bx \mid \bX - \bb_n \nleq \bzero]
  &\approx&
  \Pr[\bR/\bar{T}^{\bgamma} - \tfrac{\bsigma}{\bgamma} \leq \bx \mid \bR/\bar{T}^{\bgamma} - \tfrac{\bsigma}{\bgamma} \nleq \bzero] \\
  &=&
  \frac{\frac{1}{K}\int_0^K \{F_{\bR}(t^{\bgamma}(\bx+\tfrac{\bsigma}{\bgamma})) -F_{\bR}(t^{\bgamma}(\bx \wedge \mathbf{0} +\tfrac{\bsigma}{\bgamma}))\} \, \diff t}{\frac{1}{K}\int_0^K \bar F_{\bR}(t^{\bgamma}\tfrac{\bsigma}{\bgamma}) \, \diff t} \nonumber \\
  &=&
  \frac{\int_0^K \{F_{\bR}(t^{\bgamma}(\bx+\tfrac{\bsigma}{\bgamma})) -F_{\bR}(t^{\bgamma}(\bx \wedge \bzero +\tfrac{\bsigma}{\bgamma}))\} \, \diff t}{\int_0^K \bar F_{\bR}(t^{\bgamma}\tfrac{\bsigma}{\bgamma})\, \diff t} \nonumber.
\end{eqnarray}
By the assumptions in  Condition~\ref{cond:R}, the limit as $K \to  \infty$ of this expression,
\begin{equation}
\label{eq:MGP:R}
H_R(\bx) =\frac{\int_0^\infty \{F_{\bR}(t^{\bgamma}(\bx+\frac{ \bsigma}{ \bgamma})) - F_{\bR}(t^{\bgamma}(\bx \wedge \bzero + \frac{ \bsigma}{ \bgamma}))\} \, \diff t}{\int_0^\infty \bar F_{\bR}(t^{\bgamma}{\frac{ \bsigma}{ \bgamma}}) \, \diff t},
\end{equation}
 exists (cf the discussion just before Theorem \ref{thm:PPconv}), and it is also immediate that $H_R(\binfty) =1$, so that $H_R$ is a cdf on $[-\binfty, \binfty)^d$. If $\gamma_i = 0$ then $t^{\gamma_i}(x_i +\frac{ \sigma_i}{ \gamma_i})$ should be interpreted to mean $x_i + \sigma_i  \log t$.
We write $\GP_R$($\bsigma, \bgamma, F_{\bR}$) for the cdf \eqref{eq:MGP:R} and call it the $(R)$ representation.
Theorem \ref{thm:representation} shows that the class of such cdf-s is the same as the class of all GP cdf-s with $\bsigma > \bzero$.

Heuristically, for simplicity assuming that $\gamma_j \neq 0, j=1, \ldots d$, the calculations above proceed by equating $\ba_n^{-1}(\bX - \bb_n)$ with $\bR/\bar{T}^{\bgamma} - \tfrac{\bsigma}{\bgamma}$ so that extremes of $\bX$ asymptotically have  the form $\ba_n \bR/T^\bgamma + \bb_n -\ba_n \tfrac{\bsigma}{\bgamma}$. Setting $\bb= \bb_n - \ba_n  \tfrac{\bsigma}{\bgamma}$ and noting that $\bR$ satisfies Condition~\ref{cond:R} if and only if  $\ba_n \bR$ satisfies Condition~\ref{cond:R}, the intuition is that, asymptotically, extremes of $\bX$ have the form
\begin{equation}\label{eq:point}
\bX^\infty=\bR/T^\bgamma + \bb,
\end{equation}
for some constant $\bb$ and a random vector $\bR$ which satisfies Condition~\ref{cond:R}. The interpretation is that the vector  $\bR$ is the shape of the extreme episode, say a storm, and that $T^{-\bgamma}$ is the intensity of the storm. Here $T$ represents a pseudo-random variable with an improper uniform distribution on $(0,\infty)$. Although such a $\bX^\infty$ therefore does not have a proper distribution, one can verify that the cdf $H_R$ in  \eqref{eq:MGP:R} is derived as if it were the conditional distribution of $\bX^\infty -\bu$, given that $\bX^\infty \nleq \bu$, for $\tfrac{\bsigma}{\bgamma} =  \bu - \bb$, i.e., as the formal conditional distribution of $\bR/T^\bgamma - \tfrac{\bsigma}{\bgamma}$  given that  $\bR/T^\bgamma - \tfrac{\bsigma}{\bgamma} \nleq \bzero$. In  statistical application one would  assume that $\bu$ is large enough to make it possible to use the cdf $H_R$ as a model for threshold excesses.  The parameters of $\bR$ and the parameters $\bsigma$ and $\bgamma$ are then estimated from the observed threshold excesses. The heuristic interpretation in case one or more of the $\gamma_j$ equals $0$ is the same, one only has to write $R_j -\sigma_j \log T$  instead of $R_j/T^0 - \sigma_j/0$.

Figure~\ref{fig:PPtoGP} illustrates how the multivariate GP is derived from the Poisson process representation. Each realization of the Poisson process~\eqref{eq:PPV} yields a small, Poisson distributed, number of points in the region $\{\bx:\bx\nleq \bzero\}$. The expected number of such points is $\Exp[\bigvee_{j=1}^d(\gamma_jR_j/\sigma_j)^{1/\gamma_j}]$, where if $\gamma_j=0$ the component is to be interpreted as $e^{R_j/\sigma_j}$, and thus depends on the distribution of $\bR$ and the parameters $\bsigma,\bgamma$.

\begin{figure}
 \includegraphics[width=0.3\textwidth]{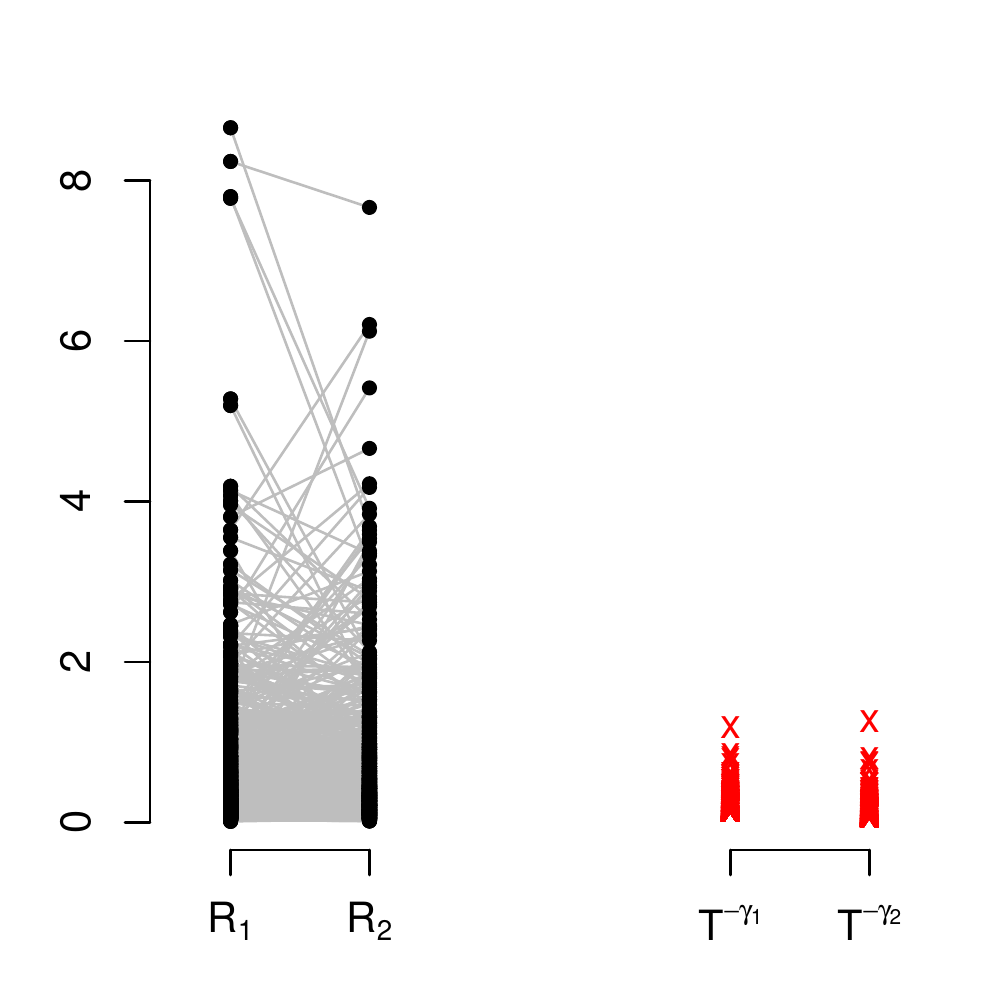}
 \includegraphics[width=0.3\textwidth]{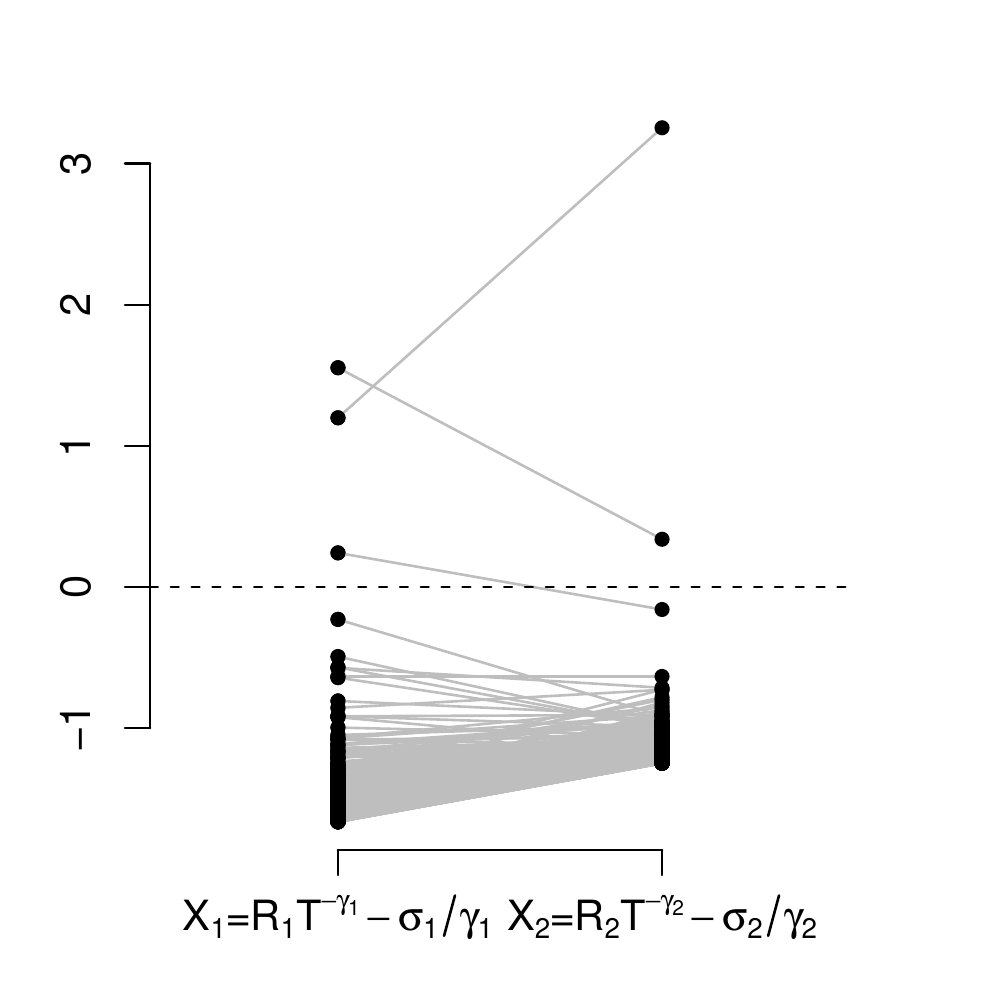}
 \includegraphics[width=0.3\textwidth]{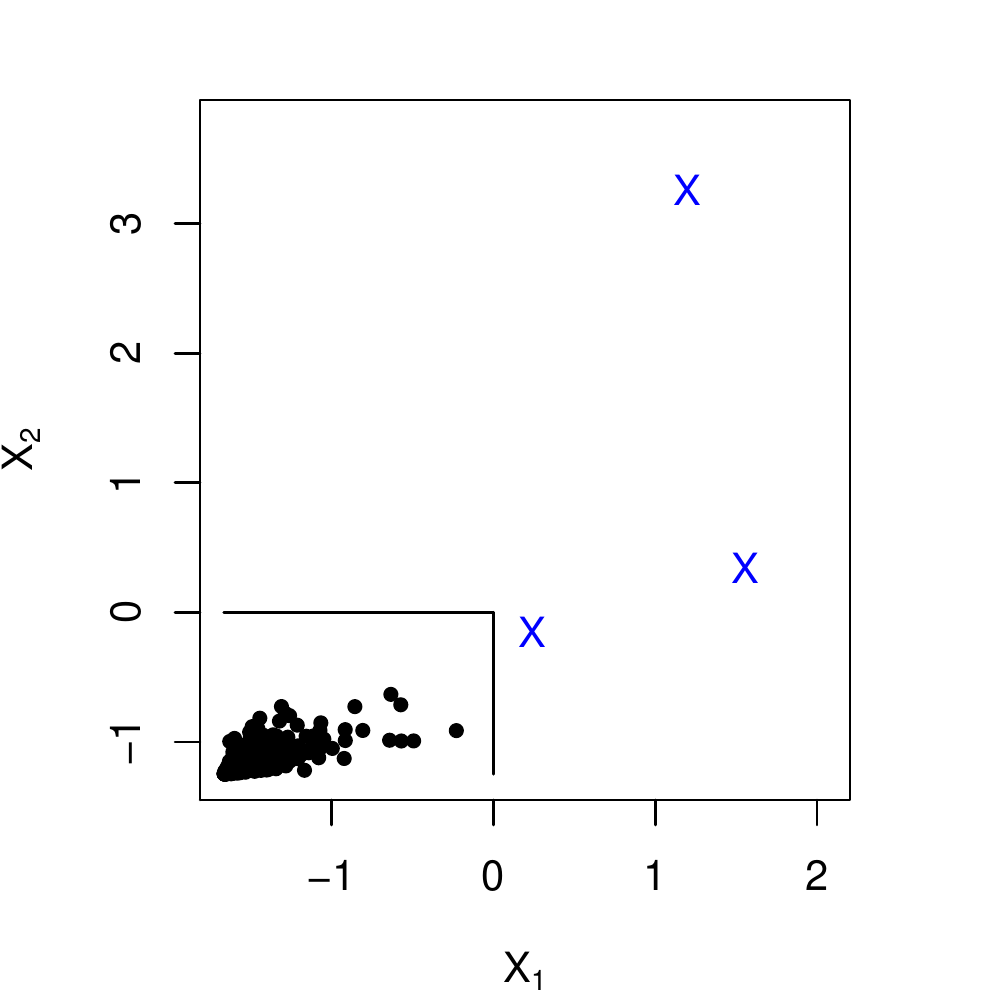}
 \caption{Deriving the GP from the Poisson process representation. Left: two-dimensional illustrations of ``shape vectors'' $\bR$ and the 1000 largest ``intensities'' $T^{-\bgamma}$ for $\bgamma=(0.3,0.4)$. Centre: points $\bX=\bR/T^\bgamma-\bsigma/\bgamma$, where $\bsigma=(0.5,0.5)$ against index, with a horizontal line at zero. Right: points $X_2$ versus $X_1$ with exceedances of zero in at least one coordinate highlighted.}
 \label{fig:PPtoGP}
\end{figure}

Defining $\bU$ by $ \tfrac{\bsigma}{\bgamma}e^{\bgamma \bU} = \bR$, where we use the convention that if $\gamma_j=0$ then the $j$-th component is given by  $\sigma_j U_j =R_j$,  we can write \eqref{eq:MGP:R} as
\begin{equation}
\label{eq:MGP:RS}
  H_U( \bx ) =
  \frac%
  {\int_0^\infty \, \bigl\{ F_{\bU} \bigl(\frac{1}{\bgamma}  \log(\frac{\bgamma }{\bsigma}\bx   + 1) + \log t \bigr) - F_{\bU} \bigl( \frac{1}{\bgamma}  \log(\frac{\bgamma }{\bsigma}(\bx\wedge \bzero) + 1) +  \log t \bigr) \bigr\} \, \diff t}%
  {\int_0^\infty \,  \bar{F}_{\bU}( \log t)  \, \diff t},
\end{equation}
for $\bx$ such that $\frac{\bgamma }{\bsigma}\bx   + \bone > \bzero$, and where $F_{\bU}$ is the cdf of  $\bU$. Here we assume that $0 < \Exp(e^{U_j}) < \infty$ for $j = 1, \ldots, d$, which is equivalent to assuming that $\bR$ satisfies Condition~\ref{cond:R}. We write \GPs($\bsigma, \bgamma, F_{\bU}$) for the cdf defined by \eqref{eq:MGP:RS} and call it the $(U)$ representation. The intuition parallel to \eqref{eq:point} is that $H_U$ is the formal conditional distribution of
$$
 \bsigma \frac{e^{\bgamma(\bU - \log T)}-1}{\bgamma}
 $$
 given that $ \tfrac{\bsigma}{\bgamma}(e^{\bgamma(\bU - \log T)}-1) \nleq \bzero$ or, equivalently, given that $\bU - \log T \nleq \bzero$.

For later use we note that a  \GPs($\bone, \bzero, F_{\bU}$) vector $\bX_0$ has the cdf
\begin{equation}
\label{eq:MGP:RS0}
  H_U( \bx ) =
  \frac%
  {\int_0^\infty \, \{ F_{\bU} ( \bx +  \log t ) - F_{\bU} ( \bx \wedge \bzero +  \log t ) \} \, \diff t}%
  {\int_0^\infty \,  \bar{F}_{\bU}( \log t)  \, \diff t},
\end{equation}
and that a general \GPs\ vector $\bX$ is obtained from  $\bX_0$ through the transformation
\begin{equation}\label{eq:zerotogeneral}
  \bX = \bsigma\frac{e^{\bgamma \bX_0} - 1}{\bgamma}.
\end{equation}

Suppose now that $\bU=\bS $ where $\bS $ satisfies $\bigvee_{j=1}^d S_j=0$ and that $\bsigma =\bone$. It is straightforward to see that if $t>0$ then
$F_{\bS}(\bx + \log t) - F_{\bS}  (\bx \wedge \bzero + \log t) = \1_{\{0 < t < 1\}} \, F_{\bS}  (\bx + \log t)$, and, in particular, that $\bar{F}_{\bS}(\log t) = \1_{\{0 < t < 1\}}$. Inserting this into \eqref{eq:MGP:RS0} then gives the cdf
\begin{equation} \label{eq:FdHcdf0}
  H_S( \bx )
  =
  \int_{0}^{1} F_{\bS}(\bx + \log t) \, \diff t
  =
  \int_{0}^{\infty} F_{\bS}(\bx -v) \, e^{-v} \, \diff v,
\end{equation}
where the second equality follows from the change of variable $\log t = -v$. Further, using the transformation \eqref{eq:zerotogeneral} which connects \eqref{eq:MGP:RS0} with \eqref{eq:MGP:RS}, it follows more generally that if $\bU = \bS $, where $\bigvee_{j=1}^d S_j=0$, then for general $\bsigma, \bgamma$ one obtains the cdf
\begin{equation}\label{eq:FdH}
  H_{S}(\bx)
  =
  \int_0^{1}
    F_{\bS} \left( \tfrac{1}{\bgamma}\log \left( \tfrac{\bgamma}{\bsigma}\bx + 1 \right) + \log t \right)  \,
  \diff t = \int_0^{\infty}
    F_{\bS} \left( \tfrac{1}{\bgamma}\log \left( \tfrac{\bgamma}{\bsigma}\bx + 1 \right) - v \right) \,
    e^{-v} \,
  \diff s.
\end{equation}
We write GP$_S$($\bsigma, \bgamma, F_{\bS}$) for the cdf \eqref{eq:FdH}, and call it the $(S)$ representation.

The last expression in \eqref{eq:FdHcdf0} can be given an interpretation in terms of random variables: it is the cdf of $\bS   + E$, where $E$ is a standard exponential variable which is independent of $\bS$, and then  \eqref{eq:FdH} is the cdf of $\tfrac{\bsigma}{\bgamma}\bigl(e^{\gamma (\bS   + E)} - \bone\bigr)$. This is the \citet{ferreira2014} spectral representation transformed to the exponential scale.

\begin{theorem}
\label{thm:representation}
Suppose $\bsigma > \bzero$. The   \GPr($\bsigma, \bgamma, F_{\bR}$),  \GPs($\bsigma, \bgamma, F_{\bU}$), and \GPS($\bsigma, \bgamma, F_{\bS}$) classes defined by  \eqref{eq:MGP:R}, \eqref{eq:MGP:RS}, and \eqref{eq:FdH}, are all equal to the class of all GP distributions with $\bsigma > \bzero$.  For each class the conditional marginal distributions are given by \eqref{eq:gpmarginspos}.
\end{theorem}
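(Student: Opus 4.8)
The plan is to show that the $(R)$, $(U)$, and $(S)$ classes coincide, that each is contained in the family of GP cdfs with $\bsigma>\bzero$, and conversely that every such GP cdf belongs to them; the claim about conditional margins then falls out of Theorem~\ref{prop:GPproperties}(i). That $(R)=(U)$ is built in: the bijection $\tfrac{\bsigma}{\bgamma}e^{\bgamma\bU}=\bR$ carries \eqref{eq:MGP:R} into \eqref{eq:MGP:RS}, and Condition~\ref{cond:R} on $\bR$ is equivalent to $0<\Exp(e^{U_j})<\infty$ for all $j$. Also $(S)\subseteq(U)$, since \eqref{eq:FdH} was obtained from \eqref{eq:MGP:RS} by the choice $\bU=\bS$, and a vector with $\bigvee_j S_j=0$ satisfies $0<\Exp(e^{S_j})\le 1<\infty$. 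The content is the reverse inclusion $(U)\subseteq(S)$: by the scale change \eqref{eq:zerotogeneral} it suffices to treat $\bsigma=\bone$, $\bgamma=\bzero$, i.e.\ \eqref{eq:MGP:RS0}. After substituting $v=\log t$ and using Fubini, the numerator of \eqref{eq:MGP:RS0} equals $\Exp\bigl[(e^{\bigvee_j U_j}-e^{\bigvee_j(U_j-x_j)})^+\bigr]$ and its denominator equals $c:=\Exp(e^{\bigvee_j U_j})\in(0,\infty)$. Setting $\bS:=\bU-(\bigvee_k U_k)\bone$, so that $\bigvee_j S_j=0$, and letting $F_{\bS}$ be the law of $\bS$ reweighted by $e^{\bigvee_k U_k}/c$, the identity $(e^{\bigvee_j U_j}-e^{\bigvee_j(U_j-x_j)})^+=e^{\bigvee_k U_k}(1-e^{\bigvee_j(S_j-x_j)})^+$ gives $H_U(\bx)=\Exp_{F_{\bS}}[(1-e^{\bigvee_j(S_j-x_j)})^+]$, while the identical manipulation of \eqref{eq:FdHcdf0} shows that the $(S)$ cdf built from this $F_{\bS}$ equals the same expression. (This reweighting is exactly the de Haan-style pseudo-polar decomposition of the Poisson process \eqref{eq:PPV}, so one could alternatively invoke \citet{ferreira2014}.) Hence $(R)=(U)=(S)$; call this the common class.

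Next I would show that the common class consists of GP cdfs with $\bsigma>\bzero$. Fix $\bR$ satisfying Condition~\ref{cond:R} and let $G$ be the GEV cdf produced by Corollary~\ref{cor:MGEV}. Replacing the denominator and numerator of \eqref{eq:MGP:R} by $-\log G(\bzero)$ and $\log\{G(\bx)/G(\bx\wedge\bzero)\}$ respectively and comparing with the definition \eqref{eq:MGPD}, one sees that the $(R)$ cdf built from $\bR$ equals the GP cdf $H$ associated with $G$; in particular it is a GP cdf. Computing the $j$-th marginal of $G$ from \eqref{eq:GEV-int} — the cross terms vanish because under Condition~\ref{cond:R} no $R_k$ takes the value $+\infty$ — identifies $G_j$ as the one-dimensional GEV margin \eqref{eq:gmarginals} with shape $\gamma_j$ and scale offset $\sigma_j=\alpha_j-\gamma_j\mu_j>0$. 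Thus $G$, and hence $H$, has $\bsigma>\bzero$, and Theorem~\ref{prop:GPproperties}(i) gives the conditional margins \eqref{eq:gpmarginspos} — the last assertion of the theorem, valid for the whole common class by the first paragraph.

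For the converse, let $H$ be a GP cdf with $\bsigma>\bzero$. By definition $H\leftrightarrow G$ for some GEV cdf $G$, and $\sigma_j>0$ for all $j$ is equivalent to $G_j(0)<1$ for all $j$, so $G$ has $\bsigma>\bzero$ as well. Corollary~\ref{cor:MGEV} then supplies an $\bR$ satisfying Condition~\ref{cond:R} with $G$ given by \eqref{eq:GEV-int}, and by the previous paragraph the $(R)$ cdf built from this $\bR$ equals the GP cdf associated with $G$, namely $H$. Hence $H$ lies in the common class, which completes the proof.

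The step I expect to be the main obstacle is $(U)\subseteq(S)$: passing from the multiplicative, point-process form of the GP cdf to the spectral ``angular part on $\{\bigvee_j s_j=0\}$ plus exponential radius'' form, which is exactly where the reweighting (equivalently, de Haan's pseudo-polar decomposition) does the work. The remaining ingredients — the Fubini rearrangements, the $\gamma_j=0$ conventions, and the marginal computation matching $(\gamma_j,\sigma_j)$ through Corollary~\ref{cor:MGEV} and \eqref{eq:gpmarginspos} — are routine but need to be carried out with care.
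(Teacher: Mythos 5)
Your proof is correct, and on the one genuinely non-trivial step it takes a different route from the paper. For the $(R)$ and $(U)$ classes you do essentially what the paper does: combine Corollary~\ref{cor:MGEV} (equation \eqref{eq:GEV-int}) with \eqref{eq:MGPD} in both directions, using that the $\bR\leftrightarrow\bU$ bijection makes the two classes identical by construction. The difference is in showing that every GP cdf with $\bsigma>\bzero$ lies in the \GPS\ class. The paper disposes of this by reducing to $\bgamma=\bzero$, $\bsigma=\bone$ and then citing the discrete version of de Haan and Ferreira's Theorem~2.1 together with (T2); you instead construct the spectral vector explicitly from $\bU$ by an exponential tilting: rewriting the numerator and denominator of \eqref{eq:MGP:RS0} as $\Exp[(e^{\vee_j U_j}-e^{\vee_j(U_j-x_j)})^+]$ and $\Exp[e^{\vee_j U_j}]$ (both identities check out via Fubini), setting $\bS=\bU-\bigvee_k U_k$, and reweighting its law by $e^{\vee_k U_k}/\Exp[e^{\vee_k U_k}]$ so that $H_U$ and the \GPS\ cdf built from this $F_{\bS}$ coincide as $\Exp_{F_{\bS}}[(1-e^{\vee_j(S_j-x_j)})^+]$. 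This buys a self-contained, constructive proof of the pseudo-polar decomposition rather than an appeal to an external theorem, and it makes the map from a $(U)$ model to its spectral measure explicit --- which is useful in its own right. One further (minor) divergence: you obtain the conditional-margins formula \eqref{eq:gpmarginspos} by identifying the margins of the associated GEV from \eqref{eq:GEV-int} and then invoking Theorem~\ref{prop:GPproperties}(i), whereas the paper computes $H_j^+$ directly from \eqref{eq:MGP:R} by a change of variables; both are fine, and your marginal identification (including the observation that the cross terms vanish because no $R_k$ equals $+\infty$) is sound.
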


\begin{proof}
The assertion for the  \GPr($\bsigma, \bgamma, F_{\bR}$) distributions follows from combining \eqref{eq:GEV-int} with \eqref{eq:MGPD}.

  By definition, the class of \GPs($\bsigma, \bgamma, F_{\bU}$) cdf-s is the same as the class of \GPr($\bsigma, \bgamma, F_{\bR}$) cdf-s, and thus the same conclusion holds for the \GPs($\bsigma, \bgamma, F_{\bU}$) cdf-s. Since \GPS($\bsigma, \bgamma, F_{\bS}$) cdf-s are \GPs($\bsigma, \bgamma, F_{\bU}$) cdf-s, it follows that they are GP distributions.

  To prove the full statement about the class \GPS($\bsigma, \bgamma, F_{\bS}$) we first note that by the construction of the \GPS($\bsigma, \bgamma, F_{\bS}$) cdf-s, it is enough to prove that the statement holds for GP distributions with $\bgamma=\bzero$ and $\bsigma = \bone$. However, then the result follows by combining (T2) with the discrete version of \cite{dehaanferreira2006}, Theorem 2.1, translated to the exponential scale (i.e., with their $\bW$ replaced by $e^{\bS +E}$), and with $\omega_0 =1$.

 The last assertion follows by straightforward calculation. As an example we prove it for the \GPr($\bsigma, \bgamma, F_{\bR}$) class for the case $\gamma_j \neq 0; \, j= 1, \ldots, d$. Let $F_j$ be the marginal distribution of the $j$-th component of $\bR$. It follows from \eqref{eq:condposmargins} and \eqref{eq:MGP:R} that $H^+_j$, the distribution of the $j$-th component of $H_R$ conditioned to be positive, is given by
$$
H_j^+(x) = 1- \frac{\int_0^{\infty} \bar{F}_j(t^{\gamma_j}(x + \frac{\sigma_j}{\gamma_j})) \, \diff t}{\int_0^{\infty} \bar{F}_j(t^{\gamma_j} \frac{\sigma_j}{\gamma_j})\, \diff t}  = 1 - (1+\tfrac{\gamma_j}{\sigma_j}x)^{-1/\gamma_j},
$$
where the second equality follows from making a change of variables from $t(\frac{\gamma_j}{\sigma_j}x + 1)^{1/\gamma_j}$ to $t$ in the numerator.
\end{proof}

It may be noted that since $F_{\bR}$, $F_{\bU}$, and  $F_{\bS}$ are cdf-s then also $H_R$, $H_U$, and  $H_S$ are cdf-s, so that in contrast to \eqref{eq:MGPD} the equations  \eqref{eq:MGP:R}, \eqref{eq:MGP:RS}, and \eqref{eq:FdH}  hold for all $\bx \in [-\infty, \infty)^d$, subject to the provision that  \eqref{eq:MGP:RS} and \eqref{eq:MGP:RS0} only apply for    $\bgamma \bx   + \bsigma> \bzero$.

The distributions of the random vectors $\bR$ and $\bU$ are not uniquely determined by the corresponding GP distributions $H_R$ and $H_U$ in \eqref{eq:MGP:R} and \eqref{eq:MGP:RS}, respectively. The next proposition is a generalization of Theorem~\ref{prop:GPproperties} (vi).

\begin{proposition}  \label{pr:undetermined}
Suppose that the random variable $Z$ is strictly positive, has finite mean and is independent of $\bR$ or $\bU$. Then \GPr$(\bsigma, \bgamma, F_{Z^{\bgamma}\bR})$ = \GPr$(\bsigma, \bgamma, F_{\bR})$ and \GPs$(\bsigma, \bgamma, F_{\bU + \log Z})$ = \GPs$(\bsigma, \bgamma, F_{\bU})$, where $Z^{\gamma_j} R_j$ should be interpreted to mean $R_j + \sigma_j\log Z$ if $\gamma_j = 0$.
\end{proposition}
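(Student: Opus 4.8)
The plan is to reduce both assertions to the $(R)$ representation \eqref{eq:MGP:R} and a single change of variables. First I would record the elementary identity that, since $Z>0$ is independent of $\bR$,
\[
  F_{Z^{\bgamma}\bR}(\by) = \Exp\bigl[\, F_{\bR}(\by / Z^{\bgamma}) \,\bigr],
\]
where in a coordinate $j$ with $\gamma_j=0$ the expression $y_j/Z^{\gamma_j}$ is read as $y_j-\sigma_j\log Z$, in agreement with the convention in the proposition. This holds uniformly in the signs of the $\gamma_j$ because $Z>0$ forces $Z^{\gamma_j}>0$, so dividing by $Z^{\gamma_j}$ preserves the direction of each inequality defining the joint cdf. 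Along the way I would check that $Z^{\bgamma}\bR$ again satisfies Condition~\ref{cond:R}: for $\gamma_j\neq 0$ one has $\Exp[|Z^{\gamma_j}R_j|^{1/\gamma_j}]=\Exp[Z]\,\Exp[|R_j|^{1/\gamma_j}]$, and for $\gamma_j=0$ one has $\Exp[\exp((R_j+\sigma_j\log Z)/\sigma_j)]=\Exp[Z]\,\Exp[\exp(R_j/\sigma_j)]$, both finite and positive since $0<\Exp[Z]<\infty$; these finiteness facts are also what makes the interchange of integrals below legitimate.

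Next I would substitute this identity into the numerator and denominator of \eqref{eq:MGP:R} with $F_{\bR}$ replaced by $F_{Z^{\bgamma}\bR}$. Using $t^{\bgamma}/Z^{\bgamma}=(t/Z)^{\bgamma}$ coordinatewise (and, where $\gamma_j=0$, $\sigma_j\log t-\sigma_j\log Z=\sigma_j\log(t/Z)$), the integrand becomes $\Exp_{Z}$ of exactly the integrand of \eqref{eq:MGP:R} with $t$ replaced by $t/Z$. I would then interchange $\Exp_{Z}$ with $\int_0^\infty\diff t$ by Tonelli (the integrands are nonnegative, the bracketed difference of cdf values being $\ge 0$), and for fixed $Z$ perform the substitution $s=t/Z$, which contributes a Jacobian factor $Z$. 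Taking the expectation pulls the factor $\Exp[Z]$ out of both the numerator and the denominator of the resulting ratio; since $0<\Exp[Z]<\infty$ these cancel, and what is left is precisely $H_R$ built from $F_{\bR}$. This gives $\GPr(\bsigma,\bgamma,F_{Z^{\bgamma}\bR})=\GPr(\bsigma,\bgamma,F_{\bR})$.

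For the $(U)$ statement I would invoke the correspondence set up before \eqref{eq:MGP:RS}: $\bU$ and $\bR$ are linked by $\tfrac{\bsigma}{\bgamma}e^{\bgamma\bU}=\bR$, so replacing $\bU$ by $\bU+\log Z$ corresponds exactly to replacing $\bR$ by $\tfrac{\bsigma}{\bgamma}e^{\bgamma(\bU+\log Z)}=Z^{\bgamma}\bR$ (with the additive reading in coordinates where $\gamma_j=0$). Hence $\GPs(\bsigma,\bgamma,F_{\bU+\log Z})=\GPs(\bsigma,\bgamma,F_{\bU})$ follows at once from the $(R)$ case; alternatively one can run the same Tonelli-plus-substitution argument directly on \eqref{eq:MGP:RS} using $F_{\bU+\log Z}(\by)=\Exp[F_{\bU}(\by-\log Z)]$ and $s=t/Z$.

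The only genuinely delicate point is the interchange of expectation and integral and the finiteness of the resulting quantities; this is handled by the moment bounds just noted (equivalently, $0<\Exp[Z]<\infty$ together with Condition~\ref{cond:R} for $\bR$) and the domination $\bar F_{\bR}(t^{\bgamma}\bx)\le\sum_{j=1}^d\Pr(R_j>t^{\gamma_j}x_j)$ already used after Condition~\ref{cond:R}. Everything else is routine bookkeeping with the $\gamma_j=0$ conventions; taking $Z$ deterministic recovers Theorem~\ref{prop:GPproperties}(vi), which is a useful sanity check on the computation.
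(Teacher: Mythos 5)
Your proposal is correct and follows essentially the same route as the paper's proof: condition on $Z$, apply Fubini/Tonelli, change variables $t\mapsto t/Z$ to extract a factor $\Exp(Z)$ from both numerator and denominator of \eqref{eq:MGP:R}, and cancel; the $(U)$ case is then deduced from the $(R)$ case via the correspondence $\tfrac{\bsigma}{\bgamma}e^{\bgamma\bU}=\bR$, exactly as the paper does. Your explicit verification that $Z^{\bgamma}\bR$ still satisfies Condition~\ref{cond:R} and your care with the signs of the $\gamma_j$ are welcome details that the paper leaves implicit.
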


\begin{proof}
We only prove the assertion for $\bR$, since the one for $\bU$ follows from it. Replacing $F_{\bR}$ by $F_{Z^{\bgamma}\bR}$ in the numerator and denominator of \eqref{eq:MGP:R} yields, after an application of Fubini's theorem and a change of variables,  a factor $\expec(Z)$ coming out in front the integrals  both in the numerator and denominator. Upon simplification, the random variable $Z$ is seen to have had no effect on $H_R$.
\end{proof}

Usually one would let the model for $\bU$ include free location parameters for each component, and the model for $\bR$ a free scale parameter for each component, in order to let data determine the relative sizes of the components.  However, as one consequence of the proposition, one should then, e.g., fix the location parameter for one of the components of $\bU$, or fix the sum of the components, to ensure parameter identifiability. Similarly, if $\gamma_j \neq 0$ for $j=1, \ldots, d$ and if the model for $\bR$ includes a free scale parameter for each component, then one should, e.g., fix one of these scale parameters.

%
%

\section{Densities, likelihoods, and censored likelihoods}
\label{sec:densities}

\subsection{Densities}
\label{sec:ssdensities}

To find the densities for the $(R)$ and $(U)$ representations, we assume that $\bR$  and $\bU$ have  densities with respect to Lebesgue measure on $\reals^d$. For the $(S)$ representation, we make the assumption that $\bS$ is obtained from a vector $\bT$  by setting $\bS = \bT - \bigvee_{j=1}^d T_{ j}$. We write \GPT($\bsigma, \bgamma, F_{\bT}$) for these distributions, write $H_T$ for the cdf-s, and call it the $(T)$ representation. Clearly, the class of \GPT\ distributions is the same as the class of \GPS\ distributions, and hence is equal to the class of all GP distributions with $\bsigma>\bzero$. The densities for the $(R)$ and $(U)$ representations are just as would be obtained if the $\bR$ and $\bU$ cdf-s were continuously differentiable and  interchange of differentiation and integration was allowed. However, they, in fact, do not require any assumptions beyond absolute continuity with respect to $d$-dimensional Lebesgue measure.

The support of the vector $\bT - \bigvee_{j=1}^d T_{ j}$ is contained in the $(d-1)$-dimensional set $\{ \bx ; \bigvee_{j=1}^d x_j =0\}$ and hence $\bT - \bigvee_{j=1}^d T_{ j}$ does not have a density with respect to Lebesgue measure on $\reals^d$. Nevertheless, the density of $H_T$ exists and can be computed if $\bT$ has a  density with respect to Lebesgue measure on $\reals^d$.


\begin{theorem} \label{th:densities}
Suppose $\bsigma > \bzero$. If $F_{\bR}$ has a density $f_{\bR}$  on $\reals^d$, then  $H_R$ has the density $h_R$ given below, if $F_{\bU}$ has density $f_{\bU}$  on $\reals^d$, then $H_U$ has the density $h_U$ below,  and if $F_{\bT}$ has density $f_{\bT}$ on $\reals^d$, then $H_T$ has density  $h_T$  below:
\begin{eqnarray} \label{eq:densityR}
  h_R(\bx) &=& \1_{\{\bx \nleq \bzero\}} \frac{1}{\int_0^\infty \bar F_{\bR}(t^{\bgamma}\tfrac{\bsigma}{\bgamma})\, \diff t}  \int_0^\infty  t^{\sum_{j=1}^{d}\gamma_j}f_{\bR}(t^{\bgamma} (\bx  + \tfrac{\bsigma}{\bgamma})) \, \diff t ,  \\
  h_U(\bx) &=& \1_{\{\bx \nleq \bzero\}}\frac{\prod_{j=1}^{d}(\gamma_j x_j +\sigma_j)^{-1}}{\int_0^\infty \bar F_{\bU}(\log t)\, \diff t}\int_0^\infty  f_{\bU}(\tfrac{1}{\bgamma}  \log(\tfrac{\bgamma }{\bsigma}\bx   + \bone) + \log t)\, \diff t, \label{eq:densityS}\\
  h_T(\bx) &=& \1_{\{\bx \nleq \bzero\}}\frac{\prod_{j=1}^{d}(\gamma_j x_j +\sigma_j)^{-1}}{  \bigvee_{j=1}^d (\frac{\gamma_j }{\sigma_j}x_j   + 1)^{1/\gamma_j}}  \int_0^\infty t^{-1} f_{\bT}(\tfrac{1}{\bgamma}  \log(\tfrac{\bgamma }{\bsigma}\bx   + \bone) +  \log t) \, \diff t, \label{eq:densityS0}
\end{eqnarray}
 for  $\bgamma \bx   + \bsigma > \bzero$, and where the densities are  $0$ otherwise. If $\gamma_j =0$ then for $h_R$  the  expressions  $t^{\gamma_j} (x_j  + \frac{\sigma_j}{\gamma_j})$  should be replaced by $x_j + \sigma_j \log t$. For $h_U$ and $h_T$, if $\gamma_j = 0$, the expressions  $\frac{1}{\gamma_j}\log(\frac{\gamma_j }{\sigma_j} x_j   + 1)$ should be replaced by their limits  $x_j/\sigma_j$.
\end{theorem}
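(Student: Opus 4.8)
The plan is to establish each density by verifying that integrating it over an arbitrary lower orthant $(-\infty,\bx]=\prod_{j=1}^d(-\infty,x_j]$ reproduces the relevant cdf $H_R$, $H_U$, or $H_T$; since these orthants form a $\pi$-system generating the Borel sets, this pins down the integrand as the density. This is exactly the rigorous substitute for ``differentiating under the integral sign'' and, resting only on Tonelli's theorem and ordinary changes of variables, it uses nothing about $f_{\bR},f_{\bU},f_{\bT}$ beyond integrability, which is the point of the final sentence of the theorem. I treat the case $\gamma_j\neq0$ for all $j$; coordinates with $\gamma_j=0$ are handled identically once the maps below are replaced by the limits stated in the theorem (for instance the substitution in the $j$-th coordinate becomes the translation $v_j=u_j+\sigma_j\log t$).

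First, $h_R$. Fix $\bx$. By Tonelli, $\int_{(-\infty,\bx]}h_R(\bu)\,\diff\bu$ equals $\bigl(\int_0^\infty\bar F_{\bR}(t^{\bgamma}\tfrac{\bsigma}{\bgamma})\,\diff t\bigr)^{-1}$ times $\int_0^\infty t^{\sum_j\gamma_j}\!\int_{(-\infty,\bx]\cap\{\bu\nleq\bzero\}}f_{\bR}\bigl(t^{\bgamma}(\bu+\tfrac{\bsigma}{\bgamma})\bigr)\,\diff\bu\,\diff t$. In the inner integral substitute $\bv=t^{\bgamma}(\bu+\tfrac{\bsigma}{\bgamma})$, a componentwise increasing bijection with Jacobian $t^{\sum_j\gamma_j}$; the constraint $\bu\le\bx$ becomes $\bv\le t^{\bgamma}(\bx+\tfrac{\bsigma}{\bgamma})$ and $\bu\nleq\bzero$ becomes $\bv\nleq t^{\bgamma}\tfrac{\bsigma}{\bgamma}$, so the inner integral equals $t^{-\sum_j\gamma_j}\bigl[F_{\bR}(t^{\bgamma}(\bx+\tfrac{\bsigma}{\bgamma}))-F_{\bR}(t^{\bgamma}((\bx\wedge\bzero)+\tfrac{\bsigma}{\bgamma}))\bigr]$, since $t^{\bgamma}(\bx+\tfrac{\bsigma}{\bgamma})\wedge t^{\bgamma}\tfrac{\bsigma}{\bgamma}=t^{\bgamma}((\bx\wedge\bzero)+\tfrac{\bsigma}{\bgamma})$. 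The powers of $t$ cancel, leaving exactly the numerator of \eqref{eq:MGP:R}, so $\int_{(-\infty,\bx]}h_R=H_R(\bx)$. Formula \eqref{eq:densityS} for $h_U$ then follows: by construction the $(U)$ and $(R)$ representations coincide with $\bR=\tfrac{\bsigma}{\bgamma}e^{\bgamma\bU}$, a componentwise smooth increasing transform of $\bU$, hence $\bR$ has density $f_{\bR}(\br)=f_{\bU}\bigl(\tfrac{1}{\bgamma}\log\tfrac{\bgamma\br}{\bsigma}\bigr)\prod_j\abs{\gamma_j r_j}^{-1}$; substituting $\br=t^{\bgamma}(\bx+\tfrac{\bsigma}{\bgamma})$ in \eqref{eq:densityR} sends the argument of $f_{\bU}$ to $\tfrac{1}{\bgamma}\log(\tfrac{\bgamma}{\bsigma}\bx+\bone)+\log t$, sends $\prod_j\abs{\gamma_j r_j}^{-1}$ to $t^{-\sum_j\gamma_j}\prod_j(\gamma_j x_j+\sigma_j)^{-1}$ on the support $\bgamma\bx+\bsigma>\bzero$, and sends $\bar F_{\bR}(t^{\bgamma}\tfrac{\bsigma}{\bgamma})$ to $\bar F_{\bU}(\log t)$ since $R_j>t^{\gamma_j}\sigma_j/\gamma_j\iff U_j>\log t$ for each sign of $\gamma_j$.

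For $h_T$: a general $\GP_T(\bsigma,\bgamma,F_{\bT})$ vector is $\bX=\tfrac{\bsigma}{\bgamma}(e^{\bgamma\bX_0}-\bone)$ for the standard-scale vector $\bX_0\sim\GP_T(\bone,\bzero,F_{\bT})$, by \eqref{eq:FdH} and \eqref{eq:zerotogeneral}, so the change-of-variables formula (Jacobian $\prod_j(\gamma_j x_j+\sigma_j)^{-1}$, and $e^{-\bigvee_j x_{0,j}}=\bigl(\bigvee_j(\tfrac{\gamma_j}{\sigma_j}x_j+1)^{1/\gamma_j}\bigr)^{-1}$ after inverting) reduces the claim to the standard case $\bsigma=\bone,\bgamma=\bzero$, where the asserted density is $h_{\bX_0}(\bx)=\1_{\{\bx\nleq\bzero\}}e^{-\bigvee_j x_j}\int_0^\infty t^{-1}f_{\bT}(\bx+\log t)\,\diff t$. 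On that scale $\bX_0=\bS+E\bone$ with $\bS=\bT-(\bigvee_j T_j)\bone$ and $E$ an independent standard exponential, i.e.\ $\bX_0=\bT+(E-\bigvee_j T_j)\bone$. Integrating $h_{\bX_0}$ over $(-\infty,\bx]$, putting $s=\log t$ and then $\bw=\bu+s\bone$, and using Tonelli twice to leave the $s$-integral for last, gives
\[
 \int_{(-\infty,\bx]}h_{\bX_0}(\bu)\,\diff\bu
 =\int_{\reals^d}e^{-\bigvee_j w_j}f_{\bT}(\bw)\Bigl(\int_{\{s\,:\,\bigvee_j(w_j-x_j)\le s<\bigvee_j w_j\}}e^{s}\,\diff s\Bigr)\diff\bw ,
\]
the $s$-range coming from the constraints $\bw\le\bx+s\bone$ and $\bw\nleq s\bone$. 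The bracket equals $\bigl(e^{\bigvee_j w_j}-e^{\bigvee_j(w_j-x_j)}\bigr)^{+}$; dividing by $e^{\bigvee_j w_j}$ and invoking the lattice identity $\bigvee_j(w_j-x_j)-\bigvee_k w_k=-\bigwedge_j(x_j+\bigvee_k w_k-w_j)$ rewrites the right-hand side as $\expec_{\bT}\bigl[1-e^{-(\bigwedge_j(x_j+\bigvee_k T_k-T_j))^{+}}\bigr]$. On the other hand $H_T(\bx)=\Pr[\bT+(E-\bigvee_k T_k)\bone\le\bx]=\Pr[E\le\bigwedge_j(x_j+\bigvee_k T_k-T_j)]$, which, after conditioning on $\bT$ and integrating out the exponential $E$, is exactly the same expression; hence $\int_{(-\infty,\bx]}h_{\bX_0}=H_T(\bx)$, completing the proof.

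The routine ingredients are the chain-rule Jacobians, the monotonicity bookkeeping for how $\le$-constraints transform under the substitutions, and the elementary $\wedge/\vee$ identities. The one \emph{genuinely delicate} point is the $(T)$ case: $\bT-(\bigvee_j T_j)\bone$ is supported on the $(d-1)$-dimensional set $\{\bigvee_j y_j=0\}$ and carries no $d$-dimensional Lebesgue density, so $h_T$ cannot be produced by differentiating under the integral; moreover, for each fixed $\bt$ the map $\bx\mapsto1-e^{-(\bigwedge_j(x_j+\bigvee_k t_k-t_j))^{+}}$ is itself singular, its distributional mixed derivatives living on the ``ridges'' $\{x_i-t_i=x_j-t_j\}$, and an absolutely continuous density appears only after those ridges are smeared out by averaging against $f_{\bT}$. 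The two-fold change of variables and the careful reordering of integrals above are precisely what makes this work; everything else is bookkeeping.
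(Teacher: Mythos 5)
Your proof is correct and uses essentially the same machinery as the paper's: Tonelli's theorem plus the changes of variables $\bv=t^{\bgamma}(\bu+\bsigma/\bgamma)$ (for $h_R$) and $s=\log t$, $\bw=\bu+s\bone$ (for $h_T$) to identify the candidate density with the cdf over lower orthants, with the general $(\bsigma,\bgamma)$ cases handled by the chain rule. The only cosmetic differences are that you run the computation backwards (from candidate density to cdf rather than from cdf to density form), obtain $h_U$ by transporting $f_{\bU}$ to $f_{\bR}$ instead of via the standard-scale case, and close the $(T)$ case by matching against $\Pr[\bS+E\le\bx]$ conditioned on $\bT$ rather than by massaging the cdf integral directly.
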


\begin{proof}
We first prove \eqref{eq:densityR} for the special case when $\bsigma=\bgamma=\bone$, and for $\bx \nleq \bzero, \, \bx + \bone > \bzero$.  The change of variables $\by = t(\bz + \bone)$ shows that for this case
\begin{eqnarray*}
 F_{\bR}(t(\bx + \bone)) - F_{\bR}(t(\bx\wedge\bzero+ \bone)) &=&    \int \1_{\{\bm{y} \le (t(\bx + \bone),\, y \nleq t(\bx\wedge\bzero+ \bone)  \}} f_{\bR}(\by) \, \diff \by \\
   &=& \int \1_{\{\bm{z} \le \bx ,\, \bm{z} \nleq \bx\wedge\bzero  \}} t^df_{\bR}(t(\bz+\bone)) \, \diff \bz.
\end{eqnarray*}
Hence, by \eqref{eq:MGP:R}, and using Fubini's theorem,
\begin{eqnarray*}
  H_R(\bx) &=& \frac{\int_{t=0}^{\infty}\int \1_{\{\bm{z} \le \bx ,\, \bm{z} \nleq \bx\wedge\bzero  \}} t^df_{\bR}(t(\bz+\bone)) \, \diff \bz \diff t}{\int_0^\infty F_{\bR}(t\bone) \, \diff t} \\
   &=& \int  \1_{\{\bm{z} \le \bx ,\, \bm{z} \nleq \bx\wedge\bzero  \}}\frac{\int_{t=0}^{\infty} t^d f_{\bR}(t(\bz+\bone)) \, \diff t }{\int_0^\infty F_{\bR}(t\bone) \, \diff t} \diff \bz \\
   &=&\int_{(-\binfty, \bx]}  \1_{\{\bm{z} \nleq \bzero  \}}\frac{\int_{t=0}^{\infty} t^d f_{\bR}(t(\bz+\bone)) \, \diff t }{\int_0^\infty F_{\bR}(t\bone) \, \diff t} \diff \bz  .
\end{eqnarray*}
We conclude that \eqref{eq:densityR} holds for $\bsigma=\bgamma=\bone$. The proof of the general form of \eqref{eq:densityR} only differs from this case in bookkeeping details, and is omitted.

To prove \eqref{eq:densityS}, recall that $H_U = H_R$ if $\bgamma = \bzero, \,\bsigma=\bone$, so that, by \eqref{eq:densityR},
\begin{equation*}
  h_U(\bx) = \1_{\{\bx \nleq \bzero\}} \frac{\int_{t=0}^{\infty} f_{\bU}(\bx+\log t) \, \diff t}{\int_0^\infty F_{\bU}(\log t) \, \diff t}.
\end{equation*}
Writing $\tilde{H}$ for the corresponding cdf, the general cdf $H_U$ is obtained as $H_U(\bx) = \tilde{H}(\frac{1}{\bgamma} \log(\frac{\bgamma}{\bsigma}\bx + \bone)) $ and \eqref{eq:densityS} then follows by a chain rule type argument.

We again only prove \eqref{eq:densityS0} for the case $\bsigma= \bone, \bgamma = \bzero$, and with $\bx \nleq \bzero$. Also here extension to the general case is a  chain rule argument.  It follows from \eqref{eq:FdHcdf0} that
\begin{eqnarray*}
H_T(\bx) &=& \int_{t=0}^{1}  F_{\bT - \bigvee_{j=1}^d T_{ j}}(\bx + \log t)
= \int_{t=0}^{\infty}\int_{\bs} \1_{\{0\leq t \leq 1, \;  \bs - \vee_{j=0}^d s_j \leq \bx + \log t\}} f_{\bT}(\bs) \diff \bs \, \diff t.
\end{eqnarray*}
Hence, using first Fubini's theorem, then a change of variables from  $te^{\bigvee_{j=1}^d s_{j}}$ to $t$ and Fubini's theorem, and finally a change of variables from  $\bs$ to  $\bs + \log t$ and Fubini's theorem,
\begin{eqnarray*}
 H_T(\bx)  &=& \int_{\bs} \int_{t=0}^{\infty}e^{-\bigvee_{j=1}^d s_{j}} \1_{\{0 \leq t e^{-\bigvee_{j=1}^d s_j} \leq 1, \; \bs\leq \bx + \log (t)\}}f_{\bT}(\bs)\, \diff t \, \diff \bs \\
   &=& \int_{t=0}^{\infty} \int_{\bs} e^{-\bigvee_{j=1}^d s_{j}} t^{-1} \1_{\{ e^{-\bigvee_{j=1}^d s_j} \leq 1, \; \bs \leq \bx  \}}f_{\bT}(\bs + \log t) \, \diff \bs \, \diff t \\
   &=& \int_{- \binfty}^{\bx} \1_{\{\bs \nleq \bzero\}} \, e^{-\bigvee_{j=1}^d s_{j}}  \int_{t=0}^{\infty}t^{-1} f_{\bT}(\bs + \log t)  \, \diff t \, \diff \bs.
\end{eqnarray*}
This proves that \eqref{eq:densityS0} holds for $\bgamma=\bzero$ and $\bsigma=\bone$.
\end{proof}


In some cases, the integrals in \eqref{eq:densityR}, \eqref{eq:densityS} and \eqref{eq:densityS0} can be computed explicitly; see the examples below.  Otherwise the one-dimensional integrals allow for fast numerical computation as soon as one can compute densities and distribution functions of $\bR$ or $\bU$ efficiently. Either way, this can make full likelihood inference possible, also in high dimensions.

\subsection{Censored likelihood}
\label{sec:censored}

Sometimes one does not trust the GP distribution to fit the excesses well on the entire set $\bx \nleq \bzero$. Then, instead of using a full likelihood obtained as a product of the densities in Theorem~\ref{th:densities}, one can use a censored likelihood which is based on the values of the excesses which are larger than some censoring threshold $\bv = (v_1, \ldots, v_d)$. This idea was introduced for multivariate extremes in \citet{smith+tawn+coles:1997}, and has since become a standard approach to inference. \citet{huser+d+g:2015} explore the merits of this and other approaches via simulation.

Write $D=\{1, \ldots, d\}$, and let $C \subset D$ be the set of indices which correspond to the components which are censored, i.e., which do not exceed their censoring threshold $v_i$. Then, using the notation $\bx_A = \{x_j; j \in A \}$ and writing $h$ for $h_R, h_U$ or $h_T$, the likelihood contribution of a censored observation is
\begin{equation}\label{eq:censored}
 h_C(\bx_{D \setminus C}) = \int_{\{x_j \in (-\infty, v_j]; \, j \in C \}} h(\bx) \, \diff \bx_C.
\end{equation}

For certain models, the $|C|-$dimensional integral in equation~\eqref{eq:censored} can be avoided, which is advantageous from a practical perspective.

\begin{example}
\label{ex:nuggets}
 The simplest situation is when the components of the shape vector $\bR$  are mutually independent. This could e.g.\ be a model for windspeeds over a small area, perhaps a wind farm, with $T^{-\bgamma}$ representing the intensity of the average geostrophic wind and with the components of $\bR$ representing random wind variations caused by local turbulence.

 Let $f_j$ be the density function of $R_j$, the $j$th component of $\bR$, let $F_j$ be the corresponding cdf, write $y_j = x_j+\sigma_j/\gamma_j$, and assume that $v_j \leq 0, j \in C$. The integral which appears in the numerator in \eqref{eq:censored} for $h=h_R$ in \eqref{eq:densityR} can then be written as
$$
\1_{\{\bx_{D \setminus C} \nleq \bzero \}  }  \int_{0}^{\infty} t^{\sum_{j \in D\setminus C} \gamma_j} \prod_{j \in C}F_j(t^{\gamma_j} v_j)\prod_{j \in D \setminus C}f_j(t^{\gamma_j} y_j) \, \diff t
$$
  and the integral in the denominator equals $\int_{0}^{\infty}
\{1- \prod_{j=1}^d F_j (t^{\gamma_j} \sigma_j/\gamma_j)\}\, \diff t$.
Here quick numerical computation of both integrands is typically possible.

Sometimes these integrals can also be computed analytically, and similarly for the corresponding integrals for $h_U$ and $h_T$. As a simple example, consider \eqref{eq:densityS0} with $\bgamma = \bzero$ and $\bsigma=\bone$ and with the components of $\bT$  having independent standard  Gumbel distributions with cdf $\exp\{-e^{-x}\}$. Then, with $c$  the number of elements in $C$, i.e., the number of censored components, and abbreviating $\1_{\{\bx_{D \setminus C} \nleq \bzero \}  }$ to $\1_{D\setminus C}$, we obtain that
\begin{eqnarray*}
 h_C(\bx_{D\setminus C}) &=&    \1_{D\setminus C} \,   e^{-\bigvee_{j=1}^d x_j} \int_{0}^{\infty} \prod_{j \in D\setminus C}e^{-x_j - \log t} \exp\{-e^{-x_j - \log t}\} \prod_{j \in C}\exp\{-e^{-v_j - \log t}\}\, \diff t\\
   &=& \1_{D\setminus C} \,  e^{-\bigvee_{j=1}^d x_j - \sum_{j \in D\setminus C} x_j}\int_{0}^{\infty} t^{-(d-c)} \exp\{-t^{-1} \,\bigl(\sum_{j \in D\setminus C} e^{- x_j} + \sum_{j \in C} e^{- v_j}\bigr) \} \, \diff t\\
   &=& \1_{D\setminus C} \, (d-c-2)! \, e^{-\bigvee_{j=1}^d x_j - \sum_{j \in D\setminus C} x_j} \bigl(\sum_{j \in D\setminus C} e^{- x_j} + \sum_{j \in C} e^{- v_j}\bigr)^{-(d-c)+1}.
\end{eqnarray*}
\end{example}

Whilst the previous example is a theoretical illustration, the class of GP distributions obtained by letting $\bR$ (or $\bU$) have independent components with parametrized marginal distributions does make for a large and flexible class of models. It includes, for example, the GP distributions associated to the commonly used logistic and negative logistic max-stable distributions. For this and further examples, see \citet{kiriliouk+rootzen+segers+wadsworth:2016}.

\subsection{Further examples}
\label{sec:examples}

We illustrate two further constructions with tractable densities. The first is a toy example to exhibit the idea of building process knowledge into a model. The second is a variation on existing extreme value models based on lognormal distributions.

\begin{example}
\label{ex:rivers}
An extreme flow episode in a river network consisting of two tributaries which join to form the main river could be modeled as $ \bR/T^{\bgamma} = (R_1/T^\gamma, R_2/T^\gamma, (R_1+R_2+E)/T^\gamma)$, with $\gamma > 0$, so that $R_3 = R_1 + R_2 + E$. Here the first component corresponds to flow in tributary number one, the second component to flow in tributary number two, and the third component to flow in the main river. The simplest model is that $R_1, R_2, E$ are independent and have a standard exponential distribution. Then,
\begin{eqnarray}
\label{eq:river1}
  \int_{0}^{\infty} t^{3\gamma}f_{\bR}(t^{\bgamma}\by) \, \diff t &=&  \1_{\{\bzero \leq \by, \, y_1 + y_2 \leq y_3\}} \int_{0}^{\infty}t^{3\gamma} e^{-t^\gamma y_3}\, \diff t \,  \nonumber \\
   &=& \1_{\{\bzero \leq \by, \,y_1 + y_2 \leq y_3\}}\, \gamma^{-1}\Gamma(3+1/\gamma) \, y_3^{-3-1/\gamma}.
\end{eqnarray}
Assuming in addition that $\bsigma =(\sigma, \sigma, \sigma)$, we have
\begin{eqnarray*}
   \int_{0}^{\infty} \bar{F}_{\bR}(t^{\bgamma} \bsigma/\bgamma) \, \diff t
   &=& \expec[\textstyle{\bigvee_{j=1}^3} (R_j \, \gamma/\sigma)^{1/\gamma}] \\
   &=& (\gamma/\sigma)^{1/\gamma} \expec[R_3^{1/\gamma}] \\
   &=& (\gamma/\sigma)^{1/\gamma}\Gamma(3+1/\gamma)/2,
\end{eqnarray*}
since $R_3$ is a sum of three exponential variables, and thus has a gamma distribution. It follows from \eqref{eq:densityR} and \eqref{eq:river1} with $\by = \bx + \tfrac{\bsigma}{\bgamma}$ that
\begin{eqnarray*}
 h_R(\bx) = \1_{\{\bx \nleq \bzero, \, -\bsigma/\bgamma \leq \bx, \, x_1 + x_2 \leq x_3\}}\, \gamma^{-1} 2 (\sigma/\gamma)^{1/\gamma}   \,  (x_3 + \sigma/\gamma)^{-3-1/\gamma}.
\end{eqnarray*}
\end{example}

\smallskip

\begin{example}
\label{ex:loggaussian}
Lognormal distributions have been used in max-stable modelling, e.g., in  \citet{huser2013}, and as point process models in \citet{wadsworth2014}, and are an important class of models. As an example, in the $(R)$ representation, suppose that $\bzero \leq \bgamma$ and that  $F_{\bR}(\bx) = \Phi(\log \bx)$, where $\Phi$ is the cdf of a $d$-dimensional normal distribution with mean $\bmu$ and nonsingular covariance matrix $\Sigma$. Write $\phi$ for the corresponding density and let $A=\Sigma^{-1}$ be the precision matrix. Then, writing $\by = \log(\bx+\tfrac{\bsigma}{\bgamma}) - \bmu$, we have
\begin{multline*}
  \int_0^\infty  t^{\sum_{j=1}^{d}\gamma_j}f_{\bR} \left( t^{\bgamma} (\bx  + \tfrac{\bsigma}{\bgamma}) \right) \, \diff t
  =
  \frac{1}{\prod_{j=1}^{d}(x_j + \frac{\sigma_j}{\gamma_j})}
  \int_0^\infty  \phi \left(\bgamma \log t+ \log(\bx+\tfrac{\bsigma}{\bgamma}) \right) \, \diff t \\
  =
  \frac{1}{\prod_{j=1}^{d}(x_j + \frac{\sigma_j}{\gamma_j})}
  \frac{|A|^{1/2}}{(2\pi)^{d/2}}
  \int_0^\infty   \exp \left( -\tfrac{1}{2}(\bgamma \log t+ \by) A(\bgamma \log t+ \by)' \right) \, \diff t.
\end{multline*}
Making the change of variables from $\log t$ to $t$ and completing the square, we can evaluate the integral, finding
\begin{equation*}
  h_R(\bx)
  =
  \1_{\{\bx \nleq \bzero\}}
  \frac{|A|^{1/2}}{[(2\pi)^{(d-1)} \, \bgamma A \bgamma']^{1/2}}
  \frac{1}{\prod_{j=1}^{d}(x_j + \frac{\sigma_j}{\gamma_j})}
  \frac%
    {\exp \left[ -\frac{1}{2} \left( \by A \by' -  \frac{(\bgamma A \by'-1)^2}{\bgamma A \bgamma'} \right) \right]}%
    {\int_0^\infty \bar \Phi \left( \bgamma\log(t) +\log(\bsigma/\bgamma) \right) \, \diff t}.
\end{equation*}
The integral in the denominator can be expressed as a sum of $d$ components, each of which involves a $(d-1)$-dimensional normal cdf, see \cite{huser2013}. However, if $d$ is large then this expression is cumbersome. Inference methods for similar high-dimensional models are explored in \citet{deFondevilleDavison2016}.

\end{example}

\section{Probabilities and conditional probabilities}
\label{sec:condprob}
Equations \eqref{eq:MGP:R}, \eqref{eq:MGP:RS}, and \eqref{eq:FdH} give probabilities of rectangles for GP distributions, on the real scale. In this section they are  generalized to expressions for probabilities of general sets and for conditional probabilities.
Below, we only consider \GPr\ models. It is straightforward to derive the corresponding formulas for the other representations.

Let  $F = \{\by; \by \nleq \bzero \}$, set $A = \{\by; \, \by \leq \bx\}$, and for $\ba, \bb \in \reals^d$ and a set $B \subset \reals^d$ write $\ba(B+\bb)$ for the set $\{\ba(\by +\bb); \, \by \in B\}$. As is easily checked,
\begin{equation}
\label{eq:formalcond}
  H_R(\bx)
  =
  H_R(A)
  =
  \frac%
    {\int_{0}^{\infty} \Pr[\bR \in t^{\bgamma} (A \cap F + \bsigma/\bgamma)] \, \diff t  }%
    {\int_{0}^{\infty} \Pr[\bR \in t^{\bgamma} ( F + \bsigma/\bgamma)]\, \diff t}.
\end{equation}
Now, if in the derivation of \eqref{eq:MGP:R} the special set $A$  defined above is replaced by a general set $A \subset \reals^d$, the result still is the same,
\begin{equation}
\label{eq:formalcondA}
  H_R(A)
  =
  \frac%
    {\int_{0}^{\infty} \Pr[\bR \in t^{\bgamma}( A \cap F + \bsigma/\bgamma)] \, \diff t }%
    {\int_{0}^{\infty} \Pr[\bR \in t^{\bgamma}( F + \bsigma/\bgamma)] \, \diff t}
  =
  \frac%
    {\int_{0}^{\infty} \Pr[\bR/t^{\bgamma} - \bsigma/\bgamma  \in A \cap F ] \, \diff t }%
    {\int_{0}^{\infty} \Pr[\bR/t^{\bgamma} - \bsigma/\bgamma \in F ] \, \diff t}.
\end{equation}
A proof that \eqref{eq:formalcondA} holds for any set $A$ is immediate: using Fubini's theorem it is seen that the right-hand side of the equation is a probability distribution as function of $A$, and since it agrees with the distribution $H_R$ on sets of the form $\{\by; \, \by \leq \bx\}$, the two distributions are equal. The intuition is that $H_R(A)$ is the (formal) conditional probability of the event $\{\bR/T^{\bgamma} - \bsigma/\bgamma \in A\}$  given the event $\{\bR/T^{\bgamma} - \bsigma/\bgamma \nleq \bzero\}$.

Let the random vector $\bX$ have the distribution $H_R$ in \eqref{eq:MGP:R}. Then $\Pr[\bX \in A \mid \bX \in B] = H_R(A\cap B)/H_R(B)$, and hence \eqref{eq:formalcondA}  can also be used to find conditional probabilities. Further, assuming continuity, \eqref{eq:densityR} determines the conditional densities. For instance, writing $f_{|\, X_1 =x}$ for the conditional density of $(X_2, \ldots X_d)$ given that  $X_1 =x$, we find, for $x>0$,
\begin{equation} \label{eq:conddensity}
f_{|\, X_1 =x}(x_2, \ldots x_d)
  =
  \frac%
    {\int_0^\infty  t^{\sum_{j=1}^{d}\gamma_j}f_{\bR}\left(t^{\bgamma} ((x, x_2, \ldots x_d)  + \tfrac{\bsigma}{\bgamma}) \right) \, \diff t}%
    {\int_0^\infty  t^{\gamma_1}f_{R_1} \left( t^{\gamma_1} (x  + \frac{\sigma_1}{\gamma_1}) \right) \, \diff t}.
\end{equation}
By further integration, it follows that
\begin{multline}\label{eq:condprobA}
  \Pr[\bX \in A \mid X_1 = x] \\
  =
  \frac%
    {\int_0^\infty
      t^{\gamma_1}
      f_{R_1} \left(t^{\gamma_1} (x  + \frac{\sigma_1}{\gamma_1})\right)
      \Pr[ (x, R_2, \ldots R_d)/t^{\bgamma}- \tfrac{\bsigma}{\bgamma} \in   A  \mid R_1 =t^{\gamma_1} (x +\frac{\sigma_1}{\gamma_1})] \,
      \diff t}%
    {\int_0^\infty  t^{\gamma_1}f_{R_1} \left( t^{\gamma_1} (x  + \frac{\sigma_1}{\gamma_1})\right) \, \diff t}.
\end{multline}

\begin{example}
\label{ex:rivers2}
In Example \ref{ex:rivers}, extreme flow episodes in the two river tributaries are modelled using $(R_1/T^\gamma, R_2/T^\gamma)$ with $R_1$ and $R_2$ independent  standard  exponential variables and with $\gamma >0$. Suppose $\bX \sim H$ where $H$ is the GP distribution obtained from $(R_1/T^\gamma, R_2/T^\gamma)$ and let $s>0$. Since $R_1 + R_2$ has a gamma distribution, it is straightforward to evaluate  \eqref{eq:formalcondA}  to find the distribution of the sum of the flows in the two tributaries:
\begin{equation}
\label{eq:riversum}
  \Pr[ X_1+X_2 > s] = c_1 \left(1+ \tfrac{\gamma}{\sigma_1 + \sigma_2} s\right)^{-1/\gamma},
\end{equation}
with $c_1=\gamma^{-1}(1+\gamma)(\sigma_1 + \sigma_2)^{-1/\gamma} / [\sigma_1^{-1/\gamma} + \sigma_2^{-1/\gamma} - (\sigma_1 + \sigma_2)^{-1/\gamma}] $.

Similar computations using \eqref{eq:conddensity} show that for $x_1, x_2>0$
\begin{equation*}
  f_{|\, X_1 =x_1}(x_2)=c_2 \left( 1 + \frac{\gamma/(1+\gamma )}{(\gamma x_1 +\sigma_1 +\sigma_2)/(1+\gamma )}\, \, x_2 \right)^{-1-1/[\gamma/(1+\gamma)]}
\end{equation*}
and
\begin{equation*}
  \Pr[X_2>x_2 \mid X_1=x_1] = c_3 \left( 1 + \frac{\gamma/(1+\gamma )}{(\gamma x_1 +\sigma_1 +\sigma_2)/(1+\gamma )}\, \, x_2 \right)^{-1/[\gamma/(1+\gamma)]},
\end{equation*}
for $c_2=(1+\gamma)(x_1+\sigma_1/\gamma)^{1+1/\gamma}(\gamma x_1 +\sigma_1 + \sigma_2)^{-2-1/\gamma}$  and $c_3 = (x_1+\sigma_1/\gamma)^{1+1/\gamma}(\gamma x_1 +\sigma_1 + \sigma_2)^{-1-1/\gamma}$. Hence, dividing \eqref{eq:riversum} with the same expression with $s$ set to zero, we find that the sum conditioned to be positive has a GP distribution with the same shape parameter as the marginal distributions but with a larger scale parameter. The conditional distribution of $X_2$ given that $X_1=x>0$, conditioned to be positive, has a GP distribution with a smaller shape parameter, $\gamma/(1+\gamma)$.
\end{example}

Many of the results in Example~\ref{ex:rivers2} hold more generally. For instance, the conditional GP  distribution of sums holds as soon as the marginals have the same shape parameter. The intuition is simple: Suppose the $\GP_R$ distribution has been  obtained from the vector $(R_1/T^\gamma - \sigma_1/\gamma, \ldots, R_d/T^\gamma - \sigma_d/\gamma)$  by (formal) conditioning on at least one of the components being positive. Then a weighted sum of the components equals $R/T^\gamma - \sigma/ \gamma$, for $R=\sum_{j=1}^{d} a_j R_j$ and $\sigma=\sum_{j=1}^{d}a_j\sigma_j$, with coefficients $a_1, \ldots, a_d$.  According to the $\GP_R$ representation, provided $a_1, \ldots, a_d \geq 0$, the distribution of $R/T^\gamma - \sigma/ \gamma$ conditioned to be positive is a one-dimensional GP distribution with parameters $\gamma$ and $\sigma$. Further, that a sum is positive implies that at least one component is positive, and hence first conditioning on at least one component being positive, and then conditioning on the sum being positive gives the same result as conditioning directly on the sum being positive. Thus the one-dimensional GP distribution holds for component sums in GP distributions. Similar reasoning can be applied to, e.g., joint distributions of several weighted sums and several components. Here, we only prove the one-dimensional result.

\begin{proposition}
\label{prop:sum}
Let $\bX$ be a GP random vector with common shape parameter $\gamma$ for all $d$ margins and with scale parameter $\bsigma > \bzero$, and if $\gamma \leq 0$ additionally assume that $\Pr[\sum_{j=1}^d a_j X_j>0] >0$.  Then, for $\ba \in [ \bzero, \binfty) \setminus \{ \bzero \}$, the conditional distribution of the weighted sum $\sum_{j=1}^d a_j X_j$ given that it is positive is generalized Pareto with shape parameter $\gamma$ and scale parameter $\sigma = \sum_{j=1}^d a_j \sigma_j$.
\end{proposition}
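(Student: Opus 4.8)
The plan is to move to the $(R)$ representation and reduce the weighted sum to its one-dimensional analogue. First I would use Theorem~\ref{thm:representation}, together with the identifiability from Theorem~\ref{prop:GPproperties}(ix) and the marginal formula \eqref{eq:gpmarginspos}, to write $\bX \sim \GP_R(\bsigma, \gamma\bone, F_{\bR})$ for a random vector $\bR$ satisfying Condition~\ref{cond:R}; the shape vector is $\gamma\bone$ because all margins carry the common shape $\gamma$. Put $R = \sum_{j=1}^d a_j R_j$ and $\sigma = \sum_{j=1}^d a_j\sigma_j$, so that $\sigma > 0$ since $\bsigma > \bzero$ and $\ba \in [\bzero, \binfty) \setminus \{\bzero\}$; when $\gamma = 0$, read $R/t^\gamma - \sigma/\gamma$ as $R - \sigma\log t$, as in the conventions following \eqref{eq:MGP:R}.

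The one elementary fact that does the work is that, since $\ba \geq \bzero$, the half-space $A_s := \{\by : \sum_{j=1}^d a_j y_j > s\}$ satisfies $A_s \subseteq F := \{\by : \by \nleq \bzero\}$ for every $s \geq 0$ (if $\by \leq \bzero$ then $\sum_j a_j y_j \leq 0 \leq s$), hence $A_s \cap F = A_s$. Since all margins share the shape $\gamma$, the $a_j$-weighted sum of the components of $\bR/t^{\bgamma} - \bsigma/\bgamma$ is the scalar $R/t^\gamma - \sigma/\gamma$, so formula \eqref{eq:formalcondA} with $A = A_s$ gives
\[
  \Pr\Bigl[\sum_{j=1}^d a_j X_j > s\Bigr]
  = H_R(A_s)
  = \frac{\int_0^\infty \Pr[R/t^\gamma - \sigma/\gamma > s]\,\diff t}{\int_0^\infty \Pr[\bR/t^{\bgamma} - \bsigma/\bgamma \in F]\,\diff t},
\]
the denominator lying in $(0,\infty)$ because $\GP_R(\bsigma, \gamma\bone, F_{\bR})$ is a bona fide probability distribution. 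Dividing this at level $s \geq 0$ by the same identity at level $0$ cancels the common denominator and yields
\[
  \Pr\Bigl[\sum_{j=1}^d a_j X_j > s \;\Big|\; \sum_{j=1}^d a_j X_j > 0\Bigr]
  = \frac{\int_0^\infty \Pr[R/t^\gamma - \sigma/\gamma > s]\,\diff t}{\int_0^\infty \Pr[R/t^\gamma - \sigma/\gamma > 0]\,\diff t}.
\]
The remaining denominator is finite (it is dominated by $\int_0^\infty \Pr[\bR/t^{\bgamma} - \bsigma/\bgamma \in F]\,\diff t < \infty$) and positive: for $\gamma > 0$ this is automatic, because Condition~\ref{cond:R} forces $\Pr[R > 0] > 0$ and hence $\Pr[R > t^\gamma\sigma/\gamma] > 0$ for $t$ near $0$; for $\gamma \leq 0$ it is exactly the standing hypothesis $\Pr[\sum_j a_j X_j > 0] > 0$.

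Finally I would evaluate the ratio by a change of variable. For $\gamma \neq 0$ and $s \geq 0$ with $1 + \gamma s/\sigma > 0$, set $\lambda = (1 + \gamma s/\sigma)^{1/\gamma} > 0$; then $R/t^\gamma - \sigma/\gamma > s$ is equivalent to $R/(t\lambda)^\gamma - \sigma/\gamma > 0$, so $u = t\lambda$ gives $\int_0^\infty \Pr[R/t^\gamma - \sigma/\gamma > s]\,\diff t = \lambda^{-1}\int_0^\infty \Pr[R/u^\gamma - \sigma/\gamma > 0]\,\diff u$, and the ratio is $\lambda^{-1} = (1 + \gamma s/\sigma)^{-1/\gamma}$. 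For $\gamma < 0$ and $s \geq -\sigma/\gamma$ the ratio is $0$, since $R < 0$ a.s.\ makes $R/t^\gamma - \sigma/\gamma < -\sigma/\gamma \leq s$; and for $\gamma = 0$, where the integrand is $\Pr[R - \sigma\log t > s]$, the substitution $u = te^{s/\sigma}$ gives ratio $e^{-s/\sigma}$. In each case the conditional survival function is precisely that of the one-dimensional generalized Pareto distribution with shape $\gamma$ and scale $\sigma$ in \eqref{eq:gpmarginspos}. Equivalently, one may observe that $\int_0^\infty \Pr[R/t^\gamma - \sigma/\gamma > s]\,\diff t$ is the numerator of the one-dimensional cdf $\GP_R(\sigma, \gamma, F_R)$ in \eqref{eq:MGP:R} and invoke the last assertion of Theorem~\ref{thm:representation}.

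The computation is entirely soft; the only delicate points are the bookkeeping across the three sign regimes of $\gamma$ with the $\log$ convention at $\gamma = 0$, and the strict positivity of the conditioning denominator, which is the sole place the extra hypothesis for $\gamma \leq 0$ is needed. The conceptual core --- that $\{\sum_j a_j X_j > 0\} \subseteq F$ whenever $\ba \geq \bzero$, so that conditioning a GP vector on its weighted sum being positive is the same as conditioning $\bR/T^{\bgamma} - \bsigma/\bgamma$ directly on the scalar $R/T^\gamma - \sigma/\gamma$ being positive --- needs no estimates, which is why no moment inequalities on $R$ are required.
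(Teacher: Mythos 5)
Your proof is correct and takes essentially the same route as the paper: pass to the $(R)$ representation, observe that $A_s \subseteq F$ for $s \ge 0$ so the formula \eqref{eq:formalcondA} reduces the weighted sum to the scalar $R/t^\gamma - \sigma/\gamma$, and finish with the change of variable $t \mapsto t(1+\gamma s/\sigma)^{1/\gamma}$. You are somewhat more explicit than the paper about finiteness and positivity of the integrals and about the $\gamma=0$ and $\gamma<0$ cases, but the underlying argument is the same.
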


\begin{proof}
Since $\Pr[\sum_{j=1}^d a_j X_j>0 ] >0$ holds automatically if $\gamma> 0$ and $\bsigma> \bzero$, this condition is satisfied for all values of $\gamma$. Let $A_x=\{\by \in \reals^d \, | \,  \sum_{j=1}^d a_j y_j>x \}$ and as above define $R=\sum_{j=1}^{d} a_j R_j$. Then, for $x>0$ and with $F = \{\by; \by \nleq \bzero \}$, as above, $A_x \cap F = A_x$, and [for $\gamma=0$ using the convention that $t^0 (x+ \sigma/0)$ means $x+\sigma  \log t$] the numerator in \eqref{eq:formalcond}  for $A=A_x$ is
\begin{equation*}
  \int_{0}^{\infty} \Pr[\bR/t^{\bgamma} - \bsigma/\bgamma \in A_x] \, \diff t
  =
  \int_{0}^{\infty} \Pr[R/t^\gamma - \sigma/\gamma > x] \, \diff t,
\end{equation*}
and hence  by \eqref{eq:formalcond}
\begin{eqnarray*}
  \textstyle \Pr \left[  \sum_{j=1}^d a_j X_j>x \, \Big| \, \sum_{j=1}^d a_j X_j>0 \right]
  &=&
  \frac{H_R(A_x)}{H_R(A_0)}
  =
  \frac%
    {\int_{0}^{\infty} \Pr[ R/t^\gamma - \sigma/\gamma > x] \, \diff t}%
    {\int_{0}^{\infty} \Pr[ R/t^\gamma - \sigma/\gamma > 0] \, \diff t}\\
  &=&
  (1+\tfrac{\gamma}{\sigma}x)^{-1/\gamma},
\end{eqnarray*}
where the last equality follows from making the change of variables from $t(1+x/\sigma)^{1/\gamma}$ to $t$ in the numerator.
\end{proof}

Example~\ref{ex:independentFrechet} exhibits a situation where the component sum in a GP distribution is identically equal to $- \infty$  and hence the assumption $\Pr[ X_1 + X_2 > 0] > 0$ is not satisfied.
%
%
\section{Simulation}
\label{sec:simulation}

In this section we outline four methods for sampling from multivariate GP distributions. For Methods 1 to 3 we focus on simulation of a GP vector $\bX_0$ with $\bsigma=\bone$ and $\bgamma=\bzero$, since a vector $\bX$ with general $\bsigma$ and $\bgamma$ is obtained at once from the vector $\bX_0$ through \eqref{eq:zerotogeneral}. Furthermore, using the connection between \GPs\ and \GPr\ distributions, \GPr\ vectors may be obtained by simulating \GPs\ vectors, and vice versa. Throughout we assume that simulation of $\bU$ from $F_{\bU}$ and $\bT$ from $F_T$ is possible. Recall the relation $\bS = \bT - \bigvee_{j=1}^d T_j$ which was used to define the $(T)$ representation. The first method follows immediately from \eqref{eq:FdHcdf0}.

\medskip

\noindent
{\em Method 1: simulation from the $(T)$ representation.} Simulate a vector $\bT \sim F_{\bT}$ and an independent variable $E \sim $ Exp$(1)$ and set $\bX_0 = E + \bT-\max_{1\leq j \leq d}T_j$.

\medskip

Simulation from the $(R)$ and $(U)$ representations is less direct. We propose three methods: rejection sampling, MCMC sampling, and approximate simulation using \eqref{eq:smgpdappr}. The idea in Methods~2 and~3 is to use an appropriate change of measure so that Method~1 can be used to simulate from the $(T)$ representation. The \GPT($\bone, \bzero, F_T$) density is
\begin{equation*}
  h_T(\bx) = \1_{\{\bx \nleq \bzero\}} \,  e^{-\bigvee_{j=1}^d x_j}
 \int_{0}^{\infty} t^{-1} f_{\bT}(\bx + \log t) \, \diff t.
\end{equation*}
If in this equation one replaces $\bT$  by $\bT_0$ where $\bT_0$ has density
\begin{equation}
\label{eq:tilteddensity}
  f_{\bT_0}(\bx) =  \frac{e^{\bigvee_{j=1}^d x_j} f_{\bU}(\bx)}{\int_{0}^{\infty} \bar{F}_{\bU}(\log t) \, \diff t}
\end{equation}
then
\begin{eqnarray*}
  h_T(\bx) &=& \1_{\{\bx \nleq \bzero\}} \frac{e^{-\bigvee_{j=1}^d x_j}\int_0^{\infty} t^{-1} e^{\bigvee_{j=1}^d (x_j+\log t)} f_{\bU}(\bx + \log t) \, \diff t}{\int_{0}^{\infty} \bar{F}_{\bU}(\log t) \, \diff t}\\
  &=& \1_{\{\bx \nleq \bzero\}} \frac{\int_0^{\infty}
   f_{\bU}(\bx + \log t) \, \diff t}{\int_{0}^{\infty} \bar{F}_{\bU}(\log t) \, \diff t} = h_U(\bx).
\end{eqnarray*}
Thus, if one can simulate $\bT_0$ vectors, then these give \GPs($\bone, \bzero, F_U$) vectors via Method~1.

\medskip

\noindent
{\em Method 2: simulation of $\bT_0$ via rejection sampling.} Let $\varphi$ be a probability density function which satisfies $f_{\bT_0}(\bx) \leq K \varphi(\bx)$, for some constant $K > 0$. Draw a candidate vector $\bT_0^c$ from $\varphi$ and accept the candidate with probability $f_{\bT_0}(\bm{T}^c_0) / [K \varphi(\bm{t}^c_0)]$, and repeat otherwise. Use the accepted vector as input $\bT$ in Method~1.

\medskip

The acceptance probability is $1/K$, and thus it is advantageous to find a $\varphi$ such that $K$  is not too large. In high dimensions however, such a $\varphi$ might be difficult to find.

\medskip

\noindent
{\em Method 3: simulation of $\bT_0$ via MCMC.} Use a standard Metropolis--Hastings algorithm to simulate from a Markov chain with stationary distribution \eqref{eq:tilteddensity}. At iteration $i$, draw a candidate vector $\bT_0^c$ from the density $f_{\bT}$ and accept the candidate with probability $\min\{1,  \exp(\bigvee_{j=1}^d T^{c}_{0,j} -\bigvee_{j=1}^d T^{i-1}_{0,j}) \}$,  where $\bT^{i-1}_{0}$ is the current state of the chain. If the candidate is not accepted, then the previous state of the chain is repeated. After a suitable burn-in time, values of the chain should represent dependent samples from~\eqref{eq:tilteddensity}; the draws can be thinned to produce approximately independent replicates. Use the simulated values of the chain as inputs $\bT$ to Method~1.

\medskip
Alternative proposal distributions could be used with appropriate modification of the acceptance probability; for details see e.g.\ \citet{chib-greenberg1995}.

By \eqref{eq:smgpdappr}, an approximate way to simulate $\bX \sim$ \GPr($\bsigma, \bgamma, F_{\bR}$) is as follows.

\medskip

\noindent
{\em Method 4: approximate simulation from the $(R)$ representation.}  Choose a large $K > 0$. Simulate $\bar T \sim $ Unif$\,[0, K]$ and an independent $\bR \sim F_{\bR}$. If $\bR/\bar{T}^{\bgamma} \nleq \bsigma/\bgamma$ set $\bX = \bR/\bar{T}^{\bgamma} - \bsigma/\bgamma$, and repeat otherwise.

\medskip

In this algorithm, the probability to keep a simulated $\bR/\bar{T}^{\bgamma}$ value is
$$
\frac{1}{K} \int_0^K \bar{F}_{\bR}(t^{\bgamma}\tfrac{\bsigma}{\bgamma}) \, \diff t \approx \frac{1}{K}
\int_0^\infty \bar{F}_{\bR}(t^{\bgamma}\tfrac{\bsigma}{\bgamma}) \, \diff t,
$$
so  one has to simulate approximately $K /\int_0^\infty \bar{F}_{\bR}(t^{\bgamma}\tfrac{\bsigma}{\bgamma}) \, \diff t$ values of $\bR/\bar{T}^{\bgamma}$ to get one $\bX$-value. Hence a large $K,$ which ensures that the approximating distribution $H^{(K)}$ is close to $H,$ leads to longer computation times, and a compromise has to be made. As a guide to the compromise, it is often, e.g.\ for Gaussian or log-Gaussian processes, possible to compute, analytically or numerically, sharp bounds for the approximation errors.

To summarize, Method 1 is simplest, but only produces \GPT($\bsigma, \bgamma, F_T$) vectors. Method~2 and Method~3 provide ways to simulate vectors $\bT_0$ from distribution~\eqref{eq:tilteddensity}, which can then be inserted into Method~1 to simulate from the \GPs($\bsigma, \bgamma, F_{\bU}$) and \GPr($\bsigma, \bgamma, F_{\bR}$) distributions. Method~4 is as simple to program as Method~1  and produces i.i.d.\ vectors, but, similarly to Method~3,  only approximates the target distribution.

\section{Conclusion}
\label{sec:conclusion}
This paper studies the probability theory underlying peaks over thresholds modelling of multivariate  data using generalized Pareto distributions. We first derive basic properties of the multivariate GP distribution, including behaviour under conditioning; scale change;   convergence in distribution; mixing; and connections with generalized extreme value distributions.  The main results are a point process limit result which gives a general and concrete description of the behaviour of extreme episodes; new representations of the cdf-s of multivariate GP distributions, motivated by and derived from the point process result; expressions for likelihoods and censored likelihoods; formulas for probabilities and conditional probabilities of general sets; and algorithms for random sampling from multivariate GP distributions. Throughout, the results are illustrated by examples.

We provided four different representations of GP distributions, labelled $(R)$, $(S)$, $(T)$, and $(U)$. Computationally, the $(T)$ densities are simplest, and simulation from the $(T)$ representation also is simpler than simulation from the other representations. On the other hand, it seems impractical to use the $(T)$ representation for prediction or spatial modelling, since taking  lower-dimensional margins of it do not simply lead to the proper lower-dimensional $(T)$ representations, and since a $d$-dimensional $(T)$ representation does not include any prescription for how to extend it to a $(d+1)$-dimensional one. The $(S)$, $(T)$ and $(U)$ representations allow for smooth transitions from positive to negative $\gamma_j$, in contrast to the $(R)$ representation. In some situations, however, requirements of realistic physical modelling can nevertheless lead to the use of the $(R)$ representation.

Peaks over thresholds modelling  of extremes of a random vector $\bY$ first selects a suitable level $\bu$ and then models the distribution of the over- and undershoots, $\bX= \bY-\bu$,  conditional on the occurrence of at least one overshoot, by a GP distribution. Of course, this GP model also  models  the conditional distribution of the original vector $\bY$, since $\bY=\bX+\bu$. Modelling issues which are not treated include choice of the level $\bu$, perhaps as a function of covariates like time, and modelling of the Poisson process which governs the occurrence of extreme episodes.

A further practical issue, which is outside the scope of the current paper, is that of asymptotic independence of extremes. In the event that the limiting probability of joint occurrence of extremes, conditional upon at least one extreme component, is zero, multivariate GP distributions will typically not represent the best models. Asymptotic independence is usually manifested in practice by the threshold stability properties of multivariate GP distributions not holding. Diagnostics based on these stability properties are presented in \citet{kiriliouk+rootzen+segers+wadsworth:2016}.

The paper gives a basis for understanding and modelling  of extreme episodes. We believe it will contribute to the solution of many different and important risk handling problems. However, it still is an early excursion into new territory, and much research remains to be done. Important challenges include incorporating temporal dependence and developing methods for prediction  of the unfolding of extreme episodes.

\appendix

\section*{Appendix}
\label{sec:proofs}

\begin{proof}[Proof of \eqref{eq:MGPD}]
If $x_j < \eta_j$ for some $j \in \{1, \ldots, d\}$, then $\nu( \{ \by : \by \le \bx \} ) = 0$, so that $H( \bx ) = 0$ too. Let $\bx > \bEta$. We have
\begin{align*}
  \{ \by \;:\; \by \not\leq \bzero, \; \by \leq \bx \}
  &= \{ \by \;:\; \exists j, y_j > 0; \forall j, y_j \le x_j \} \\
  &= \{ \by \;:\; \exists j, y_j > x_j \wedge 0; \forall j, y_j \le x_j \} \\
  &= \{ \by \;:\; \by \not\leq \bx \wedge \bzero, \; \by \leq \bx \}.
\end{align*}
As a consequence,
\begin{align*}
  H(\bx)
  &=
  \frac%
    { \nu( \{ \by : \by \not\leq \bzero, \; \by \leq \bx \} ) }%
    { \nu( \{ \by : \by \not\leq \bzero \} ) } \\
  &=
  \frac%
    { \nu( \{ \by : \by \not\leq \bx \wedge \bzero, \; \by \leq \bx \} ) }%
    { \nu( \{ \by : \by \not\leq \bzero \} ) } \\
  &=
  \frac%
    {(-\log G( \bx \wedge \bzero )) - (-\log G(\bx))}%
    {-\log G(\bzero)}
  =
  \frac{1}{\log G(\bzero)}
  \log \left( \frac{G(\bx \wedge \bzero)}{G(\bx)} \right),
\end{align*}
as required.
\end{proof}

The following property was used in the course of the proof of Theorem~\ref{prop:GPproperties}(vi).

\begin{proof}[Proof: a GEV cdf $G$ with $\bsigma > \bzero$ is determined by its values for $\bx \geq \bzero$]
Since $\bsigma > \bzero$  the margins of $G$ has the form \eqref{eq:gmarginals}, and hence $\bsigma$ and $\bgamma$ are determined by the values of $G(\bx)$ for $\bx > \bzero$. Further, by max-stability we have that $G(\ba_t \bx + \bb_t)^t = G(\bx)$ and hence $G(\bx)$ is determined for all values of $\bx$ such that $\ba_t \bx + \bb_t \geq \bzero$ i.e. for $\bx \geq - \ba_t^{-1} \bb_t$. Using \eqref{eq:tparameters}, it is seen that if $\gamma_i > 0$ then $-a_{t, i}^{-1} b_{t, i}\to - \sigma_i/\gamma_i =\eta_i $ and if $\gamma_i= 0$ then $- a_{t, i}^{-1} b_{t, i}\to  -\infty = \eta_i$ as $t \to \infty$. Further, if $\gamma_i < 0$ then $-a_{t, i}^{-1} b_{t, i}\to - \infty =\eta_i $ as $t \to 0$. Thus $G(\bx)$ is determined for all values in the support of $G$, and this in turn determines $G(\bx)$ for all values of $\bx$.
\end{proof}


\section*{Acknowledgement}

\noindent
Anna Kiriliouk has throughout participated in the discussions leading to this paper. We thank her and two referees for many helpful comments. Research supported by the Knut and Alice Wallenberg foundation. Johan Segers was funded by contract ``Projet d'Act\-ions de Re\-cher\-che Concert\'ees'' No.\ 12/17-045 of the ``Communaut\'e fran\c{c}aise de Belgique'' and by IAP research network Grant P7/06 of the Belgian government.

\bibliographystyle{chicago}
{\small
\bibliography{libraryMGPD}
}
\end{document}